\address{\newline{\normalsize Kavli IPMU (WPI), The University of Tokyo, 5\,-\,1\,-\,5 Kashiwanoha, Kashiwa, 277\,-\,8583, Japan}
\bigskip
\newline{\normalsize Moscow Institute of Physics and Technology, 9 Institutskiy per., Dolgoprudny,
 Moscow Region, 141701, Russia}
\bigskip
\newline{\it E-mail address}: ilya.karzhemanov@ipmu.jp}
\makeatletter\@addtoreset{equation}{section}\makeatother
\makeatletter\@addtoreset{subsection}{equation}\makeatother
\newtheorem{theorem}[equation]{Theorem}
\newtheorem{prop}[equation]{Proposition}
\newtheorem{lemma}[equation]{Lemma}
\newtheorem{cor}[equation]{Corollary}
\newtheorem{theorem-definition}[equation]{Theorem-definition}
\theoremstyle{definition}
\theoremstyle{remark}
\newtheorem{remark}[equation]{Remark}
\newcommand{\com}{\mathbb{C}}
\newcommand{\p}{\mathbb{P}}
\newcommand{\ra}{\mathbb{Q}}
\newcommand{\cel}{\mathbb{Z}}
\newcommand{\aut}{\textbf{Aut}}
\newcommand{\fie}{{\bf k}}
\newcommand{\gal}{\mathrm{Gal}}
\newcommand{\map}{\longrightarrow}
\thanks{{\it MS 2010 classification}: 14E05, 14D06, 14M20}
\begin{document}

\title{On endomorphisms of hypersurfaces}

\thanks{}

\author{Ilya Karzhemanov}

\begin{abstract}
For any prime $p\ge 5$, we show that generic hypersurface
$X_p\subset\p^p$ defined over $\ra$ admits a non-trivial rational
dominant self-map of degree $>1$, defined over $\overline{\ra}$. A
simple arithmetic application of this fact is also given.
\end{abstract}

\sloppy

\maketitle

\bigskip

\section{Introduction}
\label{section:intro}

\refstepcounter{equation}
\subsection{}

Let $X$ be an algebraic variety (smooth, projective, over a field
of characteristic $0$). Then two groups of symmetries of $X$,
namely $\text{Aut}(X)$ (biregular automorphisms) and
$\text{Bir}(X)$ (birational ones), come for free. The question on
whether these groups differ is classical and very important. For
instance, the property $\text{Aut}(X) = \text{Bir}(X)$, signifying
\emph{birational (super)rigidity} of $X$, is an obstruction for
$X$ to be rational (such $X$ had been studied in numerous papers
including \cite{isk-man}, \cite{pux}, \cite{corti}, \cite{isk-pux}
and \cite{kol}).

Typically one has $\text{Aut}(X) = \{\mathrm{id}\}$ however. In
this regard, it is natural to consider another (more general)
class of symmetries of $X$, namely $\text{End}(X)$, consisting of
\emph{rational dominant} endomorphisms of $X$. Then one may ask
(what seems to be a folklore) whether $\text{End}(X) =
\text{Bir}(X)$? (Note that the latter property is an obstruction
for $X$ to be unirational.) This question gets immediate answer
(`yes') when $X$ --- with $\text{Pic}(X) = \cel$ at least --- is
of general type (as the endomorphisms preserve the spaces
$H^0(X,mK_X)$ for all $m\in\cel$). On the other hand, already in
the Calabi-Yau case things are not that straightforward; although
still one gets $\text{End}(X) = \text{Bir}(X)$ when $X$ is a
general K3 surface for example (see \cite{chen}).

Presently, we would like to treat rationally connected $X$, namely
$X := X_N\subset\p^N$ being a hypersurface of degree $N$ (see
\cite{pux}, \cite{isk}, \cite{che-ilya} for some other aspects of
the geometry of these $X_N$). Recall that according to \cite{de-f}
every $X_N,N\ge 4$, is non-rational, having $\text{Bir}(X_N) =
\text{Aut}(X_N)$.

Our main result is

\begin{theorem}
\label{theorem:main} For any prime $p\ge 5$, (Zariski) general
$\ra$-hypersurface $X_p\subset\p^p$ admits an endomorphism
$f_{X_p}$ of degree $>1$, defined over $\overline{\ra}$. More
precisely, $f_{X_p}$ has a field of definition --- $\fie$ below,
--- same for all $X_p$.
\end{theorem}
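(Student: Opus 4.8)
The plan is to produce the endomorphism $f_{X_p}$ by a deformation/specialization argument starting from a single very explicit hypersurface, and then to spread it out over the whole family. First I would fix a model hypersurface $X_p^0\subset\p^p$ with an enormous amount of symmetry — e.g.\ a Fermat-type hypersurface $\sum_{i=0}^p x_i^p=0$, or better a hypersurface of the form $\sum_{i} x_i^{a}g_i(x) = 0$ engineered so that it carries a rational self-map obtained from a ``partial Frobenius-like'' or monomial correspondence on $\p^p$. Concretely, for $p$ prime one can try to realize $X_p^0$ as (birational to) a quotient, or as a fibration in lower-dimensional hypersurfaces with a section, so that a fibrewise multiplication-type map of degree $>1$ is available. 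The key point of this first step is purely to exhibit \emph{one} dominant rational map $f_0\colon X_p^0\dashrightarrow X_p^0$ with $\deg f_0>1$, defined over some fixed number field $\fie$; the condition $p\ge 5$ and $p$ prime should enter exactly here, to guarantee the combinatorial/arithmetic constraints (e.g.\ on exponents, or on the existence of the relevant cyclic structure) that make such an $f_0$ exist. I expect this to be the main obstacle: finding the right normal form for $X_p^0$ and verifying dominance and degree by an explicit computation on coordinates.

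Second, I would set up the universal family $\mathcal{X}\to B$, where $B\subset\p\big(H^0(\p^p,\mathcal{O}(p))\big)$ is the open locus of smooth hypersurfaces, all defined over $\ra$, and try to show that the self-map deforms. The natural tool is that a rational dominant self-map of $X_b$ of bounded degree is a point of a Hilbert/Chow scheme of graphs $\Gamma\subset X_b\times X_b$, so the data $(b, f_b)$ lives in a scheme $\mathcal{F}\to B$ of finite type; one shows the fiber over the model point $b_0$ is nonempty (Step 1) and that $\mathcal{F}\to B$ is dominant. Dominance I would get either from a tangent-space/obstruction computation at $[f_0]$ — showing the relevant deformation space of the graph inside $X_b\times X_b$ maps onto the deformations of $b$ — or, more robustly, from a rigidity statement: because $X_p$ with $p\ge 4$ has $\text{Bir}(X_p)=\text{Aut}(X_p)$ by \cite{de-f}, the map $f_b$ is essentially canonical, so it cannot be obstructed and must persist in the family. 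This is where I would lean on the birational rigidity input quoted in the excerpt.

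Third, having a component $\mathcal{F}^\circ\subset\mathcal{F}$ dominating $B$, I would descend the field of definition. Since everything ($B$, $\mathcal{X}$, the incidence conditions defining $\mathcal{F}$) is defined over $\ra$, the component $\mathcal{F}^\circ$ is defined over $\overline{\ra}$ and its generic point over a finite extension; the graph $\Gamma$ over the generic point of $B$ is then defined over a finite extension $\fie/\ra$ — and crucially $\fie$ depends only on $\mathcal{F}^\circ$, hence is the \emph{same} for all $X_p$ in the dominating family, which is the ``more precise'' clause of the theorem. For the very general (Zariski general) $X_p$ one specializes $\Gamma$ to $X_b\times X_b$ and checks that the specialization is still the graph of a dominant rational map of degree $>1$ — this is an open condition (dominance is open, and $\deg>1$ is locally constant on the good locus), so it holds on a dense open subset of $B$, giving the statement. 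The only delicate point in this last step is ensuring the specialized correspondence does not degenerate to a lower-dimensional cycle or to the graph of a birational map; I would control this by a direct degree/intersection-number computation near the model point $b_0$, where $f_0$ is explicit.
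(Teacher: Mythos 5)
Your proposal takes a genuinely different route from the paper, and unfortunately Step 1 is exactly where the difficulty lies and where the proposal has a real gap. You propose to ``exhibit one dominant rational map $f_0\colon X_p^0\dashrightarrow X_p^0$ with $\deg f_0>1$'' on a Fermat-type (or similarly symmetric) model and declare this ``the main obstacle'' without supplying any mechanism. In fact no explicit such self-map of a smooth hypersurface $X_p\subset\p^p$ ($p\ge 5$) is known, and the paper never produces one: its entire construction is an indirect way to show existence precisely because the direct approach is out of reach. The paper instead goes \emph{up} in dimension: it takes the Colliot-Th\'el\`ene hypersurface $X\subset\p^p$ over $\ra$ with $n_X=p$, forms a high-dimensional cone $\widehat{X}\subset\p^N$, smooths it to a family $\mathcal X$ of degree-$p$ hypersurfaces in $\p^N$ with $N\gg p$, uses the Harris--Mazur--Pandharipande theorem to get $\fie$-unirationality of $\mathcal X$ (the linear subspace over $\fie$ is arranged by the incidence-variety density argument), composes unirationality maps $\phi\circ\psi$ to get an endomorphism of degree $>1$, and then \emph{specializes} back to the cone and projects onto its base $X$. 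Dominance of the resulting $f_X$ is not checked by hand but is forced by Zainoulline's degree formula: because $n_X=p$, the hypersurface $X$ is incompressible over $\fie$, so $\deg f_X>0$; and the degree-$>1$ claim (Proposition~\ref{theorem:deg-more-1}) is obtained from a Stein-factorization argument inside the family $\mathcal X\to\p^1$. None of these ingredients appear in your plan, and they are exactly what substitutes for the explicit construction you leave open.

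Your Steps 2 and 3 also run into trouble. The claim that ``$\text{Bir}(X_p)=\text{Aut}(X_p)$ implies $f_b$ is essentially canonical, so it cannot be obstructed and must persist'' is not a valid deduction: birational rigidity constrains birational self-maps but says nothing about deformation theory of a \emph{non-invertible} rational correspondence in $X_b\times X_b$, and there is no a priori reason that a dominant self-map of one fiber spreads out over the base. The paper avoids this entirely by arguing in the opposite direction: it constructs the endomorphism on the \emph{generic} member of a one-parameter family over $\ra(t)$, then specializes $t\mapsto t_0$, using Remark~\ref{remark:x-p-in-p} (the $n_{X_d}$-divides-$p$ specialization argument) to retain the arithmetic constraint at the special value. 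Degeneracy of the specialized correspondence to degree $0$ or $1$ is ruled out by incompressibility and by the identification of $X$ as a linear section of the cone, not by an intersection-number computation near an explicit point. In short: the arithmetic input ($n_X=p$, the degree formula, unirationality over a non-closed field of small cohomological dimension) is the engine of the proof, and its complete absence from your proposal leaves the argument without a foundation.
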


Thus the ``non-regular" geometry of $X = X_p$ is pretty much
fruitful (compare with results in \cite{a-r-v}, \cite{bea},
\cite{ps}, accounting for \emph{regular} self-maps of $X$). At the
same time, the proof of Theorem~\ref{theorem:main} is not
effective, and it would be interesting to describe (a part of)
$\text{End}(X)$ explicitly. For example, what is the
$\text{End}(X)$-action on the universal Chow group of $X$ (cf.
\cite{claire}, \cite{colliot-et-al})? It is also plausible to get
rid of the degree/ground field/dimension assumption in the
formulation of our result. Say, can one take \emph{any} Fano
manifold $X$ in place of $X_p$, or at least any (composite)
integer $N$ instead of $p$?

\refstepcounter{equation}
\subsection{}

In order to prove Theorem~\ref{theorem:main} we first relate $X_p$
to those hypersurfaces that have lots of endomorphisms. The latter
are $X_d\subset\p^N$, defined over a given field, that happen to
be unirational over this field, provided that $N\gg d$ is
sufficiently large (see {\ref{subsection:pre-1}} below for the
precise statements).

Next we employ the \emph{degree formula} from \cite{zay} (cf.
\cite{mer}, \cite{kollar}, \cite{hau}). Recall that this formula
relates the connective K-theory classes of two algebraic varieties
$X$ and $Y$ admitting some rational map $f: X \dashrightarrow Y$.
However, in order to get something fruitful in this way (e.\,g. to
show that $f$ with $\deg f = 0$ does not exist) one has to
consider $X,Y$, etc. to be defined over an algebraically
\emph{non-closed} field $\fie$, and (roughly speaking) to have no
points over $\fie$. This is the reason for the degree (resp.
dimension/ground field) restriction in Theorem~\ref{theorem:main}.

\begin{remark}
\label{remark:main-r} One of the basic obstructions for applying
our method to arbitrary degree $d$ (Fano) hypersurfaces
$X_d\subset\p^N$ is that those usually have points over the field
of definition (cf. \cite{birch}). There may also be no such nice
congruence relation as in Corollary~\ref{theorem:zay-cor-1} below.
But still, once we find (sufficiently many?) endomorphisms of
$X_p\subset\p^p$ which preserve some projective subspace
$\Gamma\subset X_p$, one may hope (by projecting from $\Gamma$) to
obtain non-trivial endomorphisms of $X_d$ for some (all?) $d\le
p-1$.
\end{remark}

The principal part of our arguments relies on (a part of) the main
result in \cite{colliot} which asserts the existence of $X = X_p =
Y$ as indicated above. It is then not hard to derive
Theorem~\ref{theorem:main} for the given $X_p$ and the general
case follows easily (see Section~\ref{section:const} for details).
Again we indicate that the initial $X_p$ is very special. (It is
defined over a field $\fie$ of cohomological dimension $\le 1$ and
does not contain points over the extensions of $\fie$ whose
degrees are coprime with $p$.)

\refstepcounter{equation}
\subsection{}

The next result was motivated by the paper \cite{abr}:

\begin{cor}
\label{theorem:main-cor} Let $X_p\subset\p^p$ be as in
Theorem~\ref{theorem:main}. Then there exists a possibly larger
number field $K\supseteq\ra$ such that Zariski closure of the
$f_{X_p}$-orbit of the set $X_p(K)$ of $K$-rational points on
$X_p$ has dimension $\ge 2$.
\end{cor}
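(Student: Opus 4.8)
The plan is to combine Theorem~\ref{theorem:main} with the elementary observation that an endomorphism of degree $>1$ cannot have \emph{all} of its orbits contained in a curve, provided it has enough rational points to move around. First I would fix a number field $K_0 \supseteq \ra$ over which both $X_p$ and the self-map $f_{X_p}$ are defined; by the ``more precise'' part of Theorem~\ref{theorem:main} the field $\fie$ is independent of $X_p$, so one may take $K_0$ to be the compositum of $\fie$ (viewed inside $\overline{\ra}$ after choosing an embedding) with $\ra$, which is a number field. Then $f := f_{X_p} \colon X_p \dashrightarrow X_p$ is a dominant rational self-map of degree $d>1$ defined over $K_0$, and after enlarging $K_0$ finitely many more times I may assume that the indeterminacy locus $Z \subset X_p$ of $f$ (and of each iterate $f^{\circ m}$, of which there are countably many but each contributes a proper closed subset defined over $\overline{\ra}$ — here one must be slightly careful and instead enlarge $K_0$ only to make $f$ and $f^{\circ 2}$ well-behaved, then argue inductively) is a proper closed $K_0$-subvariety.

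Next I would produce rational points. Since $X_p \subset \p^p$ is a hypersurface of dimension $p-1 \ge 4$, it is rationally connected, and — crucially — in the situation of \cite{colliot} used to prove Theorem~\ref{theorem:main} the variety $X_p$ is in fact \emph{unirational} over the ground field, hence over some number field $K_1 \supseteq K_0$ the set $X_p(K_1)$ is Zariski dense (the image of a dense set of $\ra$-points, or $K_1$-points, of a projective space under a dominant map). Pick a $K_1$-point $x$ lying outside the countable union of proper closed subsets where some iterate $f^{\circ m}$ is undefined or where the orbit would be ``accidentally'' constrained; such $x$ exists because $X_p(K_1)$ is Zariski dense and each excluded locus is a proper closed subset of the irreducible variety $X_p$. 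Set $K := K_1$. The forward orbit $\mathcal{O}(x) = \{x, f(x), f^{\circ 2}(x), \dots\}$ consists of $K$-rational points on $X_p$, so it suffices to show $\overline{\mathcal{O}(x)}$ has dimension $\ge 2$.

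The heart of the argument is the dimension bound, and this is where I expect the main obstacle. Suppose for contradiction that $V := \overline{\mathcal{O}(x)}$ is a curve (dimension $\le 1$; the case $\dim V = 0$, i.e. a periodic or pre-periodic point, is even easier to exclude once we can move $x$). Replacing $V$ by the union of the $f$-images of its components and passing to an iterate, one arranges that $f$ restricts to a dominant rational self-map $f|_V \colon V \dashrightarrow V$ of an irreducible curve, of degree $d>1$; normalizing, we get a degree-$d$ self-map of a smooth projective curve $\widetilde V$. By Riemann–Hurwitz such a curve must have genus $0$ or $1$, and then the self-map is highly constrained (on $\p^1$ it is a degree-$d$ rational function; on an elliptic curve it is $[\pm d']$ composed with a translation when $d$ is a square, otherwise impossible) — in particular $f|_V$ has only finitely many periodic points of bounded period and its dynamics is completely classical. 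The contradiction I would extract is that the \emph{generic} choice of $x$ (dense in $X_p$) cannot land on such a special curve: the locus of points of $X_p$ whose full forward orbit lies in \emph{some} $f$-invariant curve is a countable union of proper closed subvarieties (each invariant curve of bounded degree forms a bounded family, and a point determines its orbit), hence not all of $X_p(K)$; enlarging $K$ once more to realize a point off this locus completes the proof. The delicate point — the ``hard part'' — is making the last step rigorous: one must bound the degrees of the possible invariant curves $V$ in terms of $d$ and $\deg f$ (so that they sweep out a genuinely proper, indeed countable union of proper, closed subset), which requires controlling how the iterates $f^{\circ m}$ can contract or expand such curves inside $\p^p$; here I would invoke that $f$, being of degree $>1$, has a well-defined dynamical degree and that an invariant curve's degree is preserved up to a bounded factor, so the family of candidate $V$'s is indeed bounded and the exceptional locus is a proper closed (countable union) subset, against which a Zariski-dense set of $K$-points cannot be trapped.
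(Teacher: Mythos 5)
Your proposal takes a fundamentally different route from the paper, and it has a serious gap at the very first step. You claim that, because ``$X_p$ is unirational over the ground field'', there is a number field $K_1$ over which $X_p(K_1)$ is Zariski dense. This is false for the hypersurfaces in question. The unirationality used in the paper (Theorem~\ref{theorem:x-d-uni}) concerns only the auxiliary high-dimensional hypersurfaces $\mathcal{X}\subset\p^N$ with $N\gg p$, and over the transcendental field $\fie\supset\ra(t)$ --- not $X_p\subset\p^p$ over a number field. In fact, the whole mechanism behind Theorem~\ref{theorem:main} relies on the Colliot-Th\'el\`ene hypersurface $X$ satisfying $n_X = p$ (Proposition~\ref{theorem:c-y-c-1-dim}), i.\,e.\ $X$ has \emph{no} closed points of degree coprime with $p$; the same holds for the generic $X_p$ by Remark~\ref{remark:x-p-in-p}, and the footnote at the start of Section~\ref{section:arith} explicitly warns that one cannot specialize the abundance of $\fie$-points on $\mathcal{X}$ down to ``many'' points on $X$. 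Potential density of $X_p(K)$ is precisely what the paper declares to be open, right after stating the Corollary --- and if you had it, the Corollary would be trivial (the closure of the orbit would already contain the dense set $X_p(K)$, making the rest of your argument superfluous).

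Even granting density, your ``generic $K$-point dodges a countable union of proper closed subsets'' step does not work: a countable union of proper closed subvarieties can easily contain a Zariski dense countable set such as $X_p(K)$, so density alone never places a $K$-point outside your exceptional locus. More to the point, the paper's actual proof is \emph{local}, not global: after passing to an iterate, it produces a fixed $K$-point $o$ of $f_X$ (via \cite{faxruddin}), builds an $f_X$-invariant cube $\square_o$ around $o$ using $n_X = p$ to rule out $o$ degenerating to an $\ra$-point (Lemma~\ref{theorem:inv-cub}), deduces that the eigenvalues of $\mathrm{Jac}_o(f_X)$ are algebraic integers of absolute value $1$, proves via Dirichlet's unit theorem and $f_X^2\ne\mathrm{id}$ that at least two of them are multiplicatively independent (Lemma~\ref{theorem:mult-indep}), and finally invokes the Amerik--Bogomolov--Rovinsky theorem \cite{abr} to turn multiplicative independence of $p-j-1>1$ eigenvalues into a lower bound on the dimension of the orbit closure. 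Your proposal never touches this fixed-point linearization, which is the essential idea; the Riemann--Hurwitz exclusion of invariant curves you sketch, besides the acknowledged gaps in bounding invariant curves, simply addresses the wrong mechanism.
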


One may consider Corollary~\ref{theorem:main-cor} as a
generalization of \cite[Theorem 1.4]{har-yu}. Yet, unfortunately,
our conclusion is weaker and it would be interesting to establish
potential density of the set $X_p(\ra)$ in $X_p$ (e.\,g. by
refining the arguments of Section~\ref{section:arith} below).

\bigskip

\thanks{{\bf Acknowledgments.}
I would like to thank F. Bogomolov and K. Zainoulline for their
interest and fruitful conversations. The results of this paper
have been discussed with K. Oguiso, S. Okawa, and B. Poonen during
my visits to Osaka University and MIT in Spring 2015; I am
grateful to these people for hospitality. Also comments by
anonymous referee have helped to improve the exposition. The work
was supported by World Premier International Research Initiative
(WPI), MEXT, Japan, and Grant-in-Aid for Scientific Research
(26887009) from Japan Mathematical Society (Kakenhi).

\bigskip

\section{Preliminaries}
\label{section:prelim}

\refstepcounter{equation}
\subsection{}
\label{subsection:pre-1}

Fix some integers $d$ and $r \ge 1$. Recall that every smooth
hypersurface $X_d\subset\p^N$ of degree $d$ contains a projective
subspace $\Lambda\simeq\p^r$ when $N\gg 1$ (see \cite{h-m-p},
\cite{kol-et-al}). More precisely, $X_d$ corresponds to (generic)
point in the incidence subvariety
$$
Z := \left\{(X_d,\Lambda)\ \vert \ \Lambda\subset
X_d\right\}\subset
\p^{\scriptscriptstyle\binom{N+d}{d}}\times\text{Gr}(r+1,N+1),
$$
with \emph{dominant} projections $Z\map\p,\,\text{Gr}$.

Further, if $\Lambda$ is given over an arbitrary field
$\fie_0\subset\com$ by equations $l_{N-r+1}=\ldots=l_N=0$ for some
linear forms $l_i$, then $X_d$ can be chosen to be defined over
$\fie:=\fie_0(\sqrt{-1})$. Indeed, any $X_d$ passing through
$\Lambda$ has the defining equation $\displaystyle\sum_{i=N-r+1}^N
\phi_il_i = 0$, for some (varying) forms $\phi_i$ of degree $d -
1$. Hence, since the set of $\fie$-points is obviously dense in
$\p^{\scriptscriptstyle\binom{N+d-1}{d-1}}$ (w.\,r.\,t. the
complex analytic topology), one can approximate $\phi_i$ by such
$\fie$-forms of degree $d - 1$ that $X_d$ remains smooth.

Thus, since one can always find generic $\Lambda\in\text{Gr}(r,N)$
to be defined over $\fie$, the above discussion provides a smooth
hypersurface $X_d$ and a projective subspace $\Lambda\subset X_d$,
both over $\fie$, for any given $d,r$ and sufficiently large
$N=N(d,r)$. From \cite[Corollary 3.7]{h-m-p} we obtain

\begin{theorem}[Harris, Mazur, Pandharipande]
\label{theorem:x-d-uni} $X_d$ is $\fie$-unirational.
\end{theorem}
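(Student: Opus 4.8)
The plan is to run the degree induction of Harris, Mazur and Pandharipande, checking that every construction stays defined over $\fie$. The only data are the smooth $X_d\subset\p^N$ and the linear subspace $\Lambda\simeq\p^r\subset X_d$, both over $\fie$, with $N=N(d,r)$ taken very large. For $d=2$ the claim is classical: a smooth quadric containing $\Lambda$ has in particular an $\fie$-rational point $o\in\Lambda$, and projection from $o$ is an $\fie$-birational map $X_2\dashrightarrow\p^{N-1}$, so $X_2$ is even $\fie$-rational.

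For the inductive step one reduces the degree by residuation. Assume the statement in all degrees $<d$ (for smooth hypersurfaces in any $\p^M$ carrying an $\fie$-rational linear subspace of the relevant dimension). The $(r+1)$-planes $P\subset\p^N$ containing $\Lambda$ are parametrized by a $\p^{N-r-1}$ defined over $\fie$. For general such $P$ one has $P\not\subset X_d$, so $X_d\cap P$ is a degree-$d$ hypersurface in $P\simeq\p^{r+1}$; since it contains the hyperplane $\Lambda\subset P$, its equation is divisible by the linear equation of $\Lambda$, whence $X_d\cap P=\Lambda\cup R_P$ with $R_P$ of degree $d-1$. A general point $x\in X_d$ lies on a unique $R_P$, namely for $P=\langle x,\Lambda\rangle$, and one checks (using $N\gg d,r$ and smoothness of $X_d$) that the generic $R_P$ is irreducible. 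Hence $X_d$ is $\fie$-birational to the total space of the family $\{R_P\}$, a fibration over $\p^{N-r-1}$ whose fibers are degree-$(d-1)$ hypersurfaces in $\p^{r+1}$; the dimensions match, $(N-r-1)+r=N-1=\dim X_d$. Over $K:=\fie(\p^{N-r-1})$ the generic fiber $R_\eta$ is such a hypersurface, and if $R_\eta$ is $K$-unirational then, spreading a parametrization out over the rational base $\p^{N-r-1}$, so is $X_d$ over $\fie$. By the inductive hypothesis $K$-unirationality of $R_\eta$ follows once $R_\eta$ is smooth and contains a $K$-rational linear subspace of dimension $\ge N(d-1,r')$ for a suitable $r'$. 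Unwinding $d\mapsto d-1\mapsto\cdots\mapsto 2$ produces the numerical function $N(d,r)$, and the resulting dominant map $\p^{N-1}\dashrightarrow X_d$ is defined over $\fie$ because residuation and projection are $\fie$-rational operations.

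The main obstacle is exactly the last proviso. The residuals $R_P$ form a rather special linear system of degree-$(d-1)$ hypersurfaces — it has dimension only $N-r-1$, far below that of the full system of all degree-$(d-1)$ hypersurfaces in $\p^{r+1}$ — so one must argue that its generic member is nevertheless smooth and contains a linear subspace of the dimension demanded by the next stage, \emph{and} that such a subspace can be chosen over the function field $K$, not merely over $\overline{K}$. Controlling this uniformly through all the steps of the recursion is the delicate dimension count on the relevant incidence varieties carried out in \cite[\S3]{h-m-p}; it is precisely what forces $N\gg d,r$ and leaves the bound $N(d,r)$ ineffective. Below we shall use only the output, namely the $\fie$-unirationality asserted by \cite[Corollary 3.7]{h-m-p}.
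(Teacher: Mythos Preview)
The paper gives no proof of this statement: it is simply quoted as \cite[Corollary 3.7]{h-m-p} after the discussion in~\ref{subsection:pre-1}. Your proposal supplies an expository sketch of the degree-induction/residuation argument underlying that corollary and then, for the hard part (smoothness of the generic residual and existence of a $K$-rational linear subspace inside it at each stage), defers to the same citation. So you land exactly where the paper does, only with more detail along the way; there is nothing to compare against beyond the common reference, and your outline of the method is accurate.
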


\refstepcounter{equation}
\subsection{}

Let $X,Y$ be smooth projective geometrically irreducible
$\fie_0$-varieties of dimension $d=\dim X=\dim Y$. Assume that
there is a rational $\fie_0$-map $f: X \dashrightarrow Y$. The
degree $\deg f$ equals $0$ if $f$ is non-dominant; otherwise $\deg
f := [\fie_0(X):f^*\fie_0(Y)]$.

We recall the next result from \cite{zay}:

\begin{theorem}[Zainoulline]
\label{theorem:zay} $\chi(\mathcal{O}_X)\cdot\tau_{d-1}\equiv\deg
f\cdot\chi(\mathcal{O}_Y)\cdot\tau_{d-1}\mod n_Y$, where
$\chi(\cdot)$ is the Euler characteristic,
$$
\tau_{d-1} := \prod_{p\
\mathrm{prime}}p^{~\left[\frac{d-1}{p-1}\right]}
$$
is the $(d-1)$-st Todd number, and $n_Y$ is the g.\,c.\,d. of
degrees of all closed points on $Y$.
\end{theorem}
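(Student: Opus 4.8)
The plan is to bypass connective K-theory and argue directly by Hirzebruch--Riemann--Roch on $Y$; since we work in characteristic zero, resolution of singularities and HRR are available, and the integer $\tau_{d-1}$ enters precisely as the universal denominator clearing the fractions in the truncated Todd class.

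\emph{Step 1: reduction to a morphism.} Resolving the indeterminacy of $f$ produces a smooth projective $\widetilde{X}$ together with a birational morphism $\widetilde{X}\to X$ and a morphism $g:\widetilde{X}\to Y$ with $\fie(\widetilde{X})=\fie(X)$, hence $\deg g=\deg f$. The Hodge numbers $h^{0,q}$, and therefore $\chi(\mathcal{O}_\bullet)=\sum_q(-1)^q h^{0,q}$, are birational invariants of smooth projective varieties, so $\chi(\mathcal{O}_{\widetilde{X}})=\chi(\mathcal{O}_X)$; and $n_Y$, $\tau_{d-1}$ depend only on $Y$ and on $d$. Thus we may assume $f:X\to Y$ is a morphism, generically finite of degree $\deg f$ (generically \'etale, as the characteristic is zero).

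\emph{Step 2: HRR on $Y$.} Set $\mathcal{F}:=Rf_*\mathcal{O}_X$, a bounded complex of coherent sheaves on the smooth projective $Y$. Then $\chi(\mathcal{O}_X)=\chi(Y,\mathcal{F})$, while $\mathrm{ch}_0(\mathcal{F})$ --- the Euler characteristic of the stalk of $\mathcal{F}$ at the generic point $\eta$ of $Y$, i.e.\ the length of the generic fibre $X_\eta$, a finite $\fie(Y)$-scheme --- equals $\deg f$. By Hirzebruch--Riemann--Roch, $\chi(Y,\mathcal{F})=\int_Y\mathrm{ch}(\mathcal{F})\,\mathrm{Td}(T_Y)$; subtracting $\deg f\cdot\chi(\mathcal{O}_Y)=\deg f\cdot\int_Y\mathrm{Td}(T_Y)$ and using $\mathrm{ch}_0(\mathcal{F})=\deg f$ gives
$$\chi(\mathcal{O}_X)-\deg f\cdot\chi(\mathcal{O}_Y)=\int_Y\Bigl(\sum_{i=1}^{d}\mathrm{ch}_i(\mathcal{F})\cdot\mathrm{Td}_{d-i}(T_Y)\Bigr),$$
where the top Todd term $\mathrm{Td}_d(T_Y)$ has cancelled.

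\emph{Step 3: clearing denominators and descent to $0$-cycles.} The class $\mathrm{ch}_i(\mathcal{F})$ has denominator dividing $i!$ (it is a $\cel$-combination of the $p_i/i!$, with $p_i$ the $i$-th Newton polynomial in the integral Chern classes of the cohomology sheaves of $\mathcal{F}$), and the denominator of $\mathrm{Td}_{d-i}(T_Y)$ divides $\tau_{d-i}$. A short Legendre-type computation --- using $\sum_{k\ge1}\lfloor i/p^k\rfloor=(i-s_p(i))/(p-1)$, one checks $\sum_{k\ge1}\lfloor i/p^k\rfloor+\lfloor(d-i)/(p-1)\rfloor\le\lfloor(d-1)/(p-1)\rfloor$ for every $1\le i\le d$ and every prime $p$ --- shows $i!\,\tau_{d-i}\mid\tau_{d-1}$. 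Hence $\tau_{d-1}\cdot\mathrm{ch}_i(\mathcal{F})\cdot\mathrm{Td}_{d-i}(T_Y)$ is, for each $i$, a $\cel$-linear combination of products of integral Chern classes, i.e.\ a genuine element of $\mathrm{CH}^d(Y)=\mathrm{CH}_0(Y)$. Multiplying the displayed identity by $\tau_{d-1}$ we obtain $\tau_{d-1}\bigl(\chi(\mathcal{O}_X)-\deg f\cdot\chi(\mathcal{O}_Y)\bigr)=\deg_Y\zeta$ for an honest $0$-cycle class $\zeta$ on $Y$. Since $\mathrm{CH}_0(Y)$ is generated by classes of closed points, $\deg_Y\bigl(\mathrm{CH}_0(Y)\bigr)=n_Y\cel$ by the very definition of $n_Y$; therefore $\tau_{d-1}\bigl(\chi(\mathcal{O}_X)-\deg f\cdot\chi(\mathcal{O}_Y)\bigr)\in n_Y\cel$, which is the asserted congruence.

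\emph{Main obstacle.} The delicate point is Step 3: one must know that $\tau_{d-1}$ --- and not a larger constant --- already clears every cross term $\mathrm{ch}_i\cdot\mathrm{Td}_{d-i}$; this rests on the degree-$d$ Todd term dropping out of the difference (so only $\mathrm{Td}_0,\dots,\mathrm{Td}_{d-1}$ occur) together with the divisibility $i!\,\tau_{d-i}\mid\tau_{d-1}$. A secondary technical nuisance is the generically finite but possibly non-flat case, where $\mathcal{F}$ is a true complex and $f$ may have positive-dimensional fibres over a proper closed subvariety of $Y$; this is handled uniformly by working with $Rf_*\mathcal{O}_X$ throughout, invoking HRR for bounded complexes, and using the identification $\mathrm{ch}_0(Rf_*\mathcal{O}_X)=\deg f$ at $\eta$.
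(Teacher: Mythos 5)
The paper does not prove this statement at all: Theorem~\ref{theorem:zay} is quoted verbatim from \cite{zay}, where Zainoulline's own proof runs through the degree formula in connective $K$-theory. Your argument is a correct, self-contained alternative that bypasses that machinery and goes directly through Hirzebruch--Riemann--Roch. The key points all check out: resolution of indeterminacy plus birational invariance of $\chi(\mathcal{O})$ in characteristic zero reduce to a morphism; $\chi(\mathcal{O}_X)=\chi(Y,Rf_*\mathcal{O}_X)$ and $\mathrm{ch}_0(Rf_*\mathcal{O}_X)=\deg f$ (which also handles the non-dominant case automatically, with both sides $0$ --- exactly what is needed for Corollary~\ref{theorem:zay-cor-1}) make the top Todd term cancel; the denominator bounds $i!\cdot\mathrm{ch}_i\in\mathrm{CH}^i(Y)$ (via a locally free resolution and additivity of Newton polynomials) and $\tau_m\cdot\mathrm{Td}_m(T_Y)\in\mathrm{CH}^m(Y)$ (Hirzebruch) are classical; and the arithmetic inequality $v_p(i!)+\lfloor (d-i)/(p-1)\rfloor\le\lfloor(d-1)/(p-1)\rfloor$ for $1\le i\le d$ holds because the left side is an integer bounded above by $(d-s_p(i))/(p-1)\le (d-1)/(p-1)$. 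The final step, that $\deg_Y\colon\mathrm{CH}_0(Y)\to\cel$ has image $n_Y\cel$, is exactly the definition of $n_Y$. This HRR route is essentially the one of Haution in \cite{hau}, which the paper already cites in the same breath as \cite{zay}; what it buys is elementariness (no connective $K$-theory or formal group laws), at the cost of being tied to the single invariant $\chi(\mathcal{O})$ rather than yielding Zainoulline's more structural degree formula valid across oriented cohomology theories.
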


An immediate consequence of Theorem~\ref{theorem:zay} is

\begin{cor}[{cf. \cite[Example 6.4]{zay}}]
\label{theorem:zay-cor-1} In the previous notation, if $n_X = p$,
$X = Y = X_p\subset\p^p$ (i.\,e. $d = p-1$ and $N = p$) for some
prime $p \ge 3$, and (more generally) $f$ is a $\fie$-map, then
$\deg f
> 0$. (Such $X$ is called \emph{incompressible} over
$\fie$.)\footnote{~Existence of similar hypersurfaces X of general
type, as suggested by \cite[Example 6.4]{zay}, is not clear in the
current setting because the condition $n_X = p$ need not be
satisfied (compare with Proposition~\ref{theorem:c-y-c-1-dim}
below).}
\end{cor}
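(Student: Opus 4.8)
The plan is to feed the data $X = Y = X_p \subset \p^p$ into Theorem~\ref{theorem:zay} and read off the divisibility constraint it imposes on $\deg f$. First I would compute the two geometric invariants appearing in that formula for a smooth hypersurface of degree $p$ in $\p^p$, i.e.\ of dimension $d = p-1$. The Euler characteristic $\chi(\mathcal{O}_{X_p})$ of a smooth hypersurface is a fixed integer depending only on $d$ and the degree; the point I want to extract is its value $\bmod\ n_Y = n_X = p$. Using the adjunction/Koszul computation for $\sts_{X_p}$ (or equivalently $\chi(\sts_{X_p}) = \chi(\sts_{\p^p}) - \chi(\sts_{\p^p}(-p)) = 1 - \binom{0}{p}$, which for $p \ge 3$ gives $\chi(\sts_{X_p}) = 1$), one sees that $\chi(\sts_{X_p})$ is a $p$-adic unit; in particular it is coprime to $p$. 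Simultaneously I would evaluate the Todd number $\tau_{d-1} = \tau_{p-2} = \prod_{q\ \mathrm{prime}} q^{[(p-2)/(q-1)]}$ and observe that $p \nmid \tau_{p-2}$: the exponent of $p$ in $\tau_{p-2}$ is $[(p-2)/(p-1)] = 0$ since $0 < p-2 < p-1$, so $\tau_{p-2}$ is likewise prime to $p$.

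With these two facts in hand, Theorem~\ref{theorem:zay} applied with $\fie_0 = \fie$ (legitimate since $f$ is assumed to be a $\fie$-map and $X_p$ is a smooth geometrically irreducible $\fie$-variety) reads
$$
\chi(\sts_{X_p})\cdot\tau_{p-2} \equiv \deg f \cdot \chi(\sts_{X_p})\cdot\tau_{p-2} \pmod{n_{X_p}},
$$
and by hypothesis $n_{X_p} = n_X = p$. Since $\chi(\sts_{X_p})\cdot\tau_{p-2}$ is invertible modulo $p$, I may cancel it to obtain $1 \equiv \deg f \pmod p$, hence $\deg f \equiv 1 \pmod p$. In particular $\deg f$ cannot be $0$, because $0 \not\equiv 1 \pmod p$ (here $p \ge 3$ is exactly what makes $0$ and $1$ distinct mod $p$ — for $p = 2$ the argument would collapse). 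Therefore $f$ is dominant and $\deg f > 0$, which is the assertion; the parenthetical terminology ``incompressible'' is then just a name for this conclusion, that every $\fie$-self-map of $X_p$ is dominant.

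The only genuine input beyond bookkeeping is the hypothesis $n_X = p$ itself — i.e.\ that $X_p$ really does admit a model over $\fie$ with no closed points of degree prime to $p$ — but that is an assumption of the corollary (to be supplied later from \cite{colliot}), not something to be proved here; the footnote already flags that in the general-type analogue this is the step that fails. So the main obstacle in the present statement is not any step of the argument but rather making sure the reduction mod $p$ is applied to the \emph{arithmetic} invariant $n_{X_p}$ of the chosen $\fie$-model and not to $n_{X_p \times_{\fie}\overline{\fie}} = 1$; once one is careful that $f$ is a $\fie$-map so that Theorem~\ref{theorem:zay} may be invoked over $\fie_0 = \fie$ with $n_Y = p$, the conclusion is immediate. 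I would close by remarking that the same cancellation shows more: any two $\fie$-dominant self-maps of $X_p$ have degrees congruent $\bmod\ p$, and in particular a degree-$>1$ endomorphism, if it exists, has degree $\equiv 1 \pmod p$ — a constraint that will be consistent with (and in fact exploited in) the construction of $f_{X_p}$ in Section~\ref{section:const}.
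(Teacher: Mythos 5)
Your proof is correct and follows essentially the same route as the paper: plug $X = Y = X_p\subset\p^p$ into Theorem~\ref{theorem:zay}, observe that both $\chi(\mathcal{O}_{X_p}) = 1$ and $\tau_{p-2}$ are units mod $p$ (the latter because the exponent of $p$ in $\tau_{p-2}$ is $\left[\frac{p-2}{p-1}\right] = 0$), and cancel to get $\deg f \equiv 1 \pmod p$, forcing $\deg f > 0$. The one step the paper makes explicit that you gesture at but do not actually carry out is the base change: the hypothesis $n_X = p$ is given over $\fie_0$, whereas invoking Theorem~\ref{theorem:zay} for a $\fie$-map requires knowing that $n_X$ is still divisible by $p$ after regarding $X$ over $\fie = \fie_0(\sqrt{-1})$. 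The paper disposes of this in one line using $[\fie:\fie_0]\le 2$ together with $p\ge 3$ (a closed point of $X_\fie$ above a closed $\fie_0$-point $x$ has $\fie$-degree equal to either $\deg_{\fie_0}(x)$ or $\deg_{\fie_0}(x)/2$, and in either case $p$ still divides it). You flag this as ``the main obstacle'' in your closing paragraph but never actually settle it, so strictly speaking that is the one missing verification; once supplied, your argument is complete and, as you note, yields the slightly sharper congruence $\deg f \equiv 1\pmod p$.
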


\begin{proof}
Regard $X$ as a hypersurface over $\fie$. Then, since
$[\fie:\fie_0]\le 2$, we get $n_X = p$ again. The claim now
follows from Theorem~\ref{theorem:zay} (with $\fie_0$ replaced by
$\fie$) and the fact that $\chi(\mathcal{O}_X) = 1 =
(\tau_{p-2},p)$.
\end{proof}

Existence of $X = X_p$ as in Corollary~\ref{theorem:zay-cor-1} is
guaranteed by the following result (see \cite[Theorem
8]{colliot}):

\begin{prop}[Colliot-Th\'el\`ene] \label{theorem:c-y-c-1-dim} For every prime $p\ge 5$, there is a
smooth (over $\overline{\ra}$) hypersurface $X\subset\p^p$, given
(over $\ra$) by the equation
$$
x_1^p + lx_2^p + \ldots + l^{p-1}x_p^p - \alpha x_0^p = 0
$$
for some integers $l,\alpha$, so that $n_X = p$.
\end{prop}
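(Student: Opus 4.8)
The plan is to produce explicit integers $l,\alpha$ and to verify separately that $X$ is smooth over $\overline{\ra}$ and that $n_X=p$. Smoothness is free: for a diagonal form the partial derivatives of $F:=x_1^p+lx_2^p+\ldots+l^{p-1}x_p^p-\alpha x_0^p$ vanish simultaneously only at the origin once all coefficients are nonzero (we are in characteristic $0$), so it suffices that $\alpha\neq 0$. As for $n_X$: since $X\subset\p^p$ is a hypersurface of degree $p$, a general $\ra$-line meets it in a $\ra$-rational $0$-cycle of degree $p$, whence $n_X\mid p$; as $p$ is prime, it remains to exclude $n_X=1$, i.e. to show that every closed point of $X$ has degree divisible by $p$. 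Passing to completions reduces this to a local statement: if $P$ were a closed point with $[\ra(P):\ra]$ prime to $p$, then $\sum_{w\mid l}[\ra(P)_w:\ra_l]=[\ra(P):\ra]$ is prime to $p$, so for the prime $l$ chosen below some completion $L=\ra(P)_w$ has $[L:\ra_l]$ prime to $p$ and $X(L)\neq\emptyset$. Thus it is enough to choose $l,\alpha$ so that $X$ has no point over any finite extension $L/\ra_l$ of degree prime to $p$.

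The choice: let $l$ be a prime with $l\equiv 1\pmod p$ (Dirichlet), and let $\alpha\in\cel$ be an integer that is not a $p$-th power modulo $l$ (possible since $p\mid l-1$); in particular $l\nmid\alpha$. Let $L/\ra_l$ be finite of degree prime to $p$, so its ramification index $e$ and residue degree $f$ are prime to $p$, and let $w$ be the normalised valuation of $L$ (so $w(L^\times)=\cel$ and $w(l)=e$). Suppose $(x_0:\ldots:x_p)\in X(L)$ with all $x_j\in\mathcal{O}_L$ and $\min_j w(x_j)=0$, and put $T_0:=-\alpha x_0^p$ and $T_j:=l^{j-1}x_j^p$ for $1\le j\le p$, so $\sum_{j=0}^p T_j=0$ with the $T_j$ not all zero. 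For $1\le j\le p$ with $x_j\neq 0$ one has $w(T_j)=(j-1)e+p\,w(x_j)\equiv(j-1)e\pmod p$, and these residues are pairwise distinct because $\gcd(e,p)=1$; hence no two of $T_1,\ldots,T_p$ have the same valuation. In a vanishing sum the least valuation is attained at least twice, so here it is attained by $T_0$ together with exactly one $T_{i_0}$ (in particular $x_0\neq 0$). Comparing $w(T_0)=p\,w(x_0)$ (using $l\nmid\alpha$) with $w(T_{i_0})=(i_0-1)e+p\,w(x_{i_0})$ forces $(i_0-1)e\equiv 0\pmod p$, i.e. $i_0=1$, and $w(x_0)=w(x_1)=:m$. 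Dividing $T_0$ and $T_1$ by $\pi_L^{\,pm}$ and reducing modulo $\pi_L$, the relation $T_0+T_1\equiv 0$ becomes $\overline{\alpha}\,\overline{u_0}^{\,p}=\overline{u_1}^{\,p}$ with $u_0=x_0/\pi_L^{\,m}$ and $u_1=x_1/\pi_L^{\,m}$ units; so $\overline{\alpha}$ would be a $p$-th power in the residue field $\f_{l^f}$.

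Our choice of $\alpha$ rules this out. The group $\f_{l^f}^\times$ is cyclic of order $l^f-1$, and since $p\mid l-1\mid l^f-1$ we have $\gcd(p,l^f-1)=p$, so $\overline{\alpha}$ is a $p$-th power in $\f_{l^f}^\times$ iff $\overline{\alpha}^{\,(l^f-1)/p}=1$. Now $(l^f-1)/p=\tfrac{l-1}{p}\cdot\tfrac{l^f-1}{l-1}$, and $\tfrac{l^f-1}{l-1}=1+l+\ldots+l^{f-1}\equiv f\pmod p$ is prime to $p$ (as $\gcd(f,p)=1$); since $\overline{\alpha}^{\,l-1}=1$, we get $\overline{\alpha}^{\,(l^f-1)/p}=\big(\overline{\alpha}^{\,(l-1)/p}\big)^{(l^f-1)/(l-1)}$, which is $1$ iff $\overline{\alpha}^{\,(l-1)/p}=1$ (the exponent is prime to $p$ while $\overline{\alpha}^{\,(l-1)/p}$ has order $1$ or $p$), i.e. iff $\alpha$ is a $p$-th power modulo $l$ --- contrary to the choice. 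Hence $X(L)=\emptyset$, which completes the reduction and yields $n_X=p$. The one step that really needs care is the $l$-adic bookkeeping in the middle paragraph: checking that the monomials $l^{j-1}x_j^p$ have pairwise incongruent valuations (forcing $-\alpha x_0^p$ into the leading-term cancellation), treating vanishing coordinates properly, and extracting the residue-field $p$-th-power obstruction; everything else is formal.
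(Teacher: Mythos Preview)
Your proof is correct. The paper itself does not prove this proposition; it simply cites Colliot--Th\'el\`ene's paper \cite{colliot} (Theorem~8 there) for the result. What you have written is essentially a self-contained reconstruction of that argument: the smoothness check for a diagonal form is immediate, the divisibility $n_X\mid p$ comes from intersecting with a $\ra$-line, and the heart of the matter---ruling out closed points of degree prime to $p$---is handled by the $l$-adic valuation argument, exploiting that the monomials $l^{j-1}x_j^p$ have pairwise distinct valuations modulo $p$ over any extension of $\ra_l$ with ramification index prime to $p$, forcing $\alpha$ to be a $p$-th power in the residue field, which is excluded by the choice of $\alpha$ together with the coprimality of the residue degree $f$ and $p$. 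So rather than differing from the paper's approach, you have filled in what the paper delegates to the literature; your write-up is in fact a bit more explicit than the original source about the residue-field step (the computation showing that a non-$p$-th-power in $\f_l^\times$ remains a non-$p$-th-power in $\f_{l^f}^\times$ when $\gcd(f,p)=1$).
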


\begin{remark}
\label{remark:x-p-in-p} Let $t$ be a transcendental parameter,
$X_d\subset\p^N$ some $\ra(t)$-hypersurface, and $d = N = p$. One
may assume (e.\,g. by identifying $X_d$ with an appropriate pencil
of degree $d = p$ hypersurfaces) that this $X_d$ specializes via
$t \mapsto 0$ to $X$ from Proposition~\ref{theorem:c-y-c-1-dim}.
It follows then that generic such $X_d$ also satisfies $n_{X_d} =
p$. (Here ``generic hypersurface'' means a point in a Zariski open
subset of the $\ra(t)$-variety $\p^{\scriptscriptstyle\binom{N +
d}{d}}$ parameterizing all hypersurfaces of degree $d$ in $\p^N$
defined over the field $\ra(t)$.) Indeed, by definition of $n_Y$
(see Theorem~\ref{theorem:zay}) and specialization $t \mapsto 0$
one finds that $n_{X_d}$ divides $p = n_X$, and once $n_{X_d} = 1$
we get (by the same reasoning) that $n_X = 1$ as well, a
contradiction.
\end{remark}

\bigskip

\section{Proof of Theorem~\ref{theorem:main}}
\label{section:const}

We keep on with notation of Section~\ref{section:prelim}.

\refstepcounter{equation}
\subsection{}
\label{subsection:pre-adaddf}

Let $X$ be as in Proposition~\ref{theorem:c-y-c-1-dim}. Consider a
cone $\widehat{X}\subset\p^N$ over $X$ of sufficiently large
dimension and a family $\mathcal{X}$ of degree $p$ hypersurfaces
($\subset\p^N$) over $\ra$ that smooths out the singularities of
$\widehat{X}$. We may regard (the general fiber of) $\mathcal{X}$
as a smooth $\fie_0$-hypersurface of degree $p$ in $\p^N$ for some
purely transcendental field $\fie_0\supset\ra$.

Next, using the (dominant) projections $Z\map\p,\,\text{Gr}$ one
can see by the same argument as in {\ref{subsection:pre-1}} that
the set of all $\fie$-points $(X_p,\Lambda)\in Z$ is dense on $Z$
in the complex analytic topology, and so $\mathcal{X}$ can be
approximated by $\fie$-hypersurfaces $X_p$ (for $\mathcal{X},X_p$
treated as points on $\p$, with $\fie\subset\com$). The
hypersurfaces $\mathcal{X}$ and $X_p$ can actually be put on an
affine line $\mathbb{A}^1_{\fie}\subset\p$ in such a way that the
preimage $\widetilde{\mathbb{A}^1_{\fie}}\subset Z$ of
$\mathbb{A}^1_{\fie}$ has all fibers = some projective spaces and
generic fiber isomorphic to $\Lambda$ (apply the reasoning from
{\ref{subsection:pre-1}} to this family over $\mathbb{A}^1_{\fie}$
considered as a degree $p$ hypersurface over $\fie_0(t)$).

\begin{lemma}
\label{theorem:1-par-fam-ex} In the previous setting, the
hypersurface $\mathcal{X}$ contains a projective subspace
$\subset\p^N$, isomorphic to $\Lambda$ and defined over $\fie$.
\end{lemma}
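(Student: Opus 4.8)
The plan is to transport a projective subspace from the generic fiber back to $\mathcal{X}$ using the specialization picture set up just above the statement. Recall that we have the incidence variety $Z \subset \p^{\binom{N+p}{p}} \times \text{Gr}(r+1,N+1)$ with dominant projections, an affine line $\mathbb{A}^1_{\fie} \subset \p$ on which both $\mathcal{X}$ and the approximating $\fie$-hypersurfaces $X_p$ sit, and the preimage $\widetilde{\mathbb{A}^1_{\fie}} \subset Z$, which by construction has all fibers projective spaces with generic fiber isomorphic to $\Lambda$. The key point is that $\widetilde{\mathbb{A}^1_{\fie}} \to \mathbb{A}^1_{\fie}$ is a proper (indeed projective, being a closed subscheme of a product with $\text{Gr}$) morphism with geometrically irreducible fibers that are linear subspaces; such a family is a projective bundle, or at worst becomes one after a finite base change, and in any case is flat with constant fiber dimension $r$.

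The steps I would carry out: First, identify the point $[\mathcal{X}] \in \mathbb{A}^1_{\fie}$ — which is the specialization point corresponding to (a smoothing parameter value of) the cone $\widehat{X}$ — and consider the fiber $\widetilde{\mathbb{A}^1_{\fie}}_{[\mathcal{X}]}$, which is by hypothesis some $\p^r \simeq \Lambda$ (defined a priori only over the residue field at $[\mathcal{X}]$). Second, observe that since $\mathcal{X}$ and the point $[\mathcal{X}]$ are defined over $\fie$ (as $\fie_0 \supset \ra$ is purely transcendental and the smoothing family $\mathcal{X}$ is defined over $\ra$, and the $\fie$-structure was arranged so that $[\mathcal{X}] \in \mathbb{A}^1_{\fie}(\fie)$), the fiber is a scheme over $\fie$. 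Third, argue that this fiber, being a form of $\p^r$ over $\fie$, is in fact the trivial form, hence isomorphic to $\p^r$ over $\fie$: this is where $\fie = \fie_0(\sqrt{-1})$ enters, but more directly one uses that the family $\widetilde{\mathbb{A}^1_{\fie}}$ is already embedded in $\text{Gr}(r+1, N+1)$, so every fiber comes with a distinguished embedding as a linear $\p^r \subset \p^N$; the data of such an embedded subspace over $\fie$ is exactly a $\fie$-point of $\text{Gr}(r+1,N+1)$, and restricting the family to $[\mathcal{X}]$ produces precisely such a point. Fourth, conclude that this embedded $\p^r$ lies in $\mathcal{X}$ by the very definition of $Z$ (incidence: $\Lambda \subset X_d$), and that it is isomorphic to $\Lambda$ since all fibers of $\widetilde{\mathbb{A}^1_{\fie}}$ over the connected base $\mathbb{A}^1_{\fie}$ are abstractly $\p^r$.

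The main obstacle is the third step: controlling the field of definition of the special fiber. The naive worry is that $\widetilde{\mathbb{A}^1_{\fie}} \to \mathbb{A}^1_{\fie}$ might only acquire a section (or only be a projective bundle) after an extension, so that the fiber over the $\fie$-rational point $[\mathcal{X}]$ could be a non-trivial Brauer--Severi variety. I would dispose of this by exploiting that the fibers here are not abstract Severi--Brauer varieties but come with an honest closed immersion into the fixed ambient $\p^N$: a fiber of $Z \to \p$ over an $\fie$-point is cut out inside $\text{Gr}(r+1,N+1)$ and parameterizes linear subspaces, and its own points that are defined over $\fie$ are $\fie$-linear subspaces. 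Since $\widetilde{\mathbb{A}^1_{\fie}}$ was constructed (in the paragraph preceding the lemma) so that its generic fiber over $\fie_0(t)$ is $\Lambda$ — an honest $\fie$-linear subspace — the whole family is, after spreading out, a family of genuine linear subspaces of $\p^N$ parameterized by $\mathbb{A}^1_{\fie}$, i.e. a morphism $\mathbb{A}^1_{\fie} \to \text{Gr}(r+1,N+1)$ over $\fie$; evaluating this morphism at $[\mathcal{X}] \in \mathbb{A}^1_{\fie}(\fie)$ yields a $\fie$-point of the Grassmannian, which is by definition a linear $\p^r \subset \p^N$ over $\fie$, contained in $\mathcal{X}$ and — by flatness and connectedness of the base — abstractly isomorphic to $\Lambda$. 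This completes the argument.
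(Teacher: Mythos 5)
Your argument is correct and follows essentially the same route as the paper: the paper phrases it as Galois-equivariance of $\widetilde{\mathbb{A}^1_{\fie}}\to\mathbb{A}^1_{\fie}$ forcing the fiber over the $\fie$-point $[\mathcal{X}]$ to be a projective $\fie$-space, while you phrase the identical descent argument as the family being an $\fie$-morphism $\mathbb{A}^1_{\fie}\to\mathrm{Gr}(r+1,N+1)$ (extending across $[\mathcal{X}]$ by properness) and then evaluating at the $\fie$-point $[\mathcal{X}]$. Your explicit dismissal of the Brauer--Severi worry, by noting that the fibers come with a fixed linear embedding in $\p^N$ rather than being abstract forms of $\p^r$, is a useful clarification of a point the paper leaves implicit.
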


\begin{proof}
It suffices to show that there exists a $\fie$-point on $Z$ whose
projection to $\p$ coincides with $\mathcal{X}$. Indeed, the
preceding natural projection
$\widetilde{\mathbb{A}^1_{\fie}}\map\mathbb{A}^1_{\fie}$ is
$\gal(\bar{\fie}\slash\fie)$-equivariant, hence its fiber over
$\mathcal{X}$ is a projective $\fie$-space.
\end{proof}

Without loss of generality we will assume that $\fie_0 = \ra(t)$
in what follows.

\begin{lemma}
\label{theorem:good-end} There exists a $\fie$-endomorphism $f:
\mathcal{X} \dashrightarrow \mathcal{X}$ such that $\deg f>1$.
\end{lemma}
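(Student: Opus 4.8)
The plan is to realize $f$ as a composition of a linear projection $\mathcal{X}\dashrightarrow\p^{N-1}$ with a unirational parametrization $\p^{N-1}\dashrightarrow\mathcal{X}$, both taken over $\fie$; the degree of $f$ will then be a product in which the projection alone contributes a factor $p$.

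First, recall from {\ref{subsection:pre-adaddf}} that $\mathcal{X}\subset\p^N$ is a smooth degree $p$ hypersurface with $N=N(p,r)$ as large as we wish (it smooths out a cone over $X$ of arbitrarily large dimension), and that by Lemma~\ref{theorem:1-par-fam-ex} it contains a projective subspace $\Lambda'\simeq\p^r\subset\mathcal{X}$ defined over $\fie$. Taking $N\gg p$, Theorem~\ref{theorem:x-d-uni} applies and yields a dominant rational $\fie$-map $\psi\colon\p^{N-1}\dashrightarrow\mathcal{X}$, where $N-1=\dim\mathcal{X}$ (if the parametrization furnished by \cite[Corollary 3.7]{h-m-p} has a projective space of larger dimension as its source, precompose it with a general linear projection defined over $\fie$). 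Since $\dim\p^{N-1}=\dim\mathcal{X}$ and $\psi$ is dominant, $\psi$ is generically finite; put $e:=\deg\psi\ge 1$.

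Next, choose a general $\fie$-rational point $o\in\p^N\setminus\mathcal{X}$; such a point exists because the $\fie$-points are dense in $\p^N$ in the analytic topology (as used in {\ref{subsection:pre-1}}) while $\mathcal{X}$ is a proper closed subset. Let $\rho\colon\mathcal{X}\dashrightarrow\p^{N-1}$ be the restriction to $\mathcal{X}$ of the linear projection $\pi_o\colon\p^N\dashrightarrow\p^{N-1}$ away from $o$. As $o\notin\mathcal{X}$ and $\deg\mathcal{X}=p$, a general line through $o$ meets $\mathcal{X}$ transversally in $p$ points, so $\rho$ is a dominant $\fie$-map with $\deg\rho=p$. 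Finally set $f:=\psi\circ\rho\colon\mathcal{X}\dashrightarrow\mathcal{X}$; being a composition of two dominant rational $\fie$-maps it is again a dominant rational $\fie$-map, and by multiplicativity of the degree under composition $\deg f=\deg\psi\cdot\deg\rho=e\cdot p\ge p\ge 5>1$, as claimed.

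I expect no genuine obstacle in this argument. The only delicate point is the descent of the unirational parametrization $\psi$ to $\fie$ rather than merely to $\overline{\fie}$, and this is exactly what Lemma~\ref{theorem:1-par-fam-ex} (feeding into Theorem~\ref{theorem:x-d-uni}) secures: without an $\fie$-rational linear subspace inside $\mathcal{X}$ the map $\psi$, hence $f$, need not be defined over $\fie$. By contrast, there is no ``$\deg f=0$'' subtlety here, since $\psi$ and $\rho$ are patently dominant and so is their composite; the incompressibility statement of Corollary~\ref{theorem:zay-cor-1} is needed only later, when $f$ has to be transported from the auxiliary $\mathcal{X}\subset\p^N$ back to the hypersurface $X\subset\p^p$.
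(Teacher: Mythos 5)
Your proof is correct and follows essentially the same route as the paper: obtain $\fie$-unirationality of $\mathcal{X}$ from Lemma~\ref{theorem:1-par-fam-ex} together with Theorem~\ref{theorem:x-d-uni}, and compose a dominant $\fie$-map $\mathcal{X}\dashrightarrow\p^{N-1}$ with the unirational parametrization $\p^{N-1}\dashrightarrow\mathcal{X}$. The one place you go beyond the paper's wording is in making the degree bound explicit: the paper merely says ``we may assume one of $\phi,\psi$ to have degree $>1$,'' whereas you take the map to $\p^{N-1}$ to be linear projection from an $\fie$-point $o\notin\mathcal{X}$, which has degree exactly $p=\deg\mathcal{X}$, so that $\deg f\ge p>1$ is immediate by multiplicativity; this is a cleaner justification of the same step.
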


\begin{proof}
It follows from Lemma~\ref{theorem:1-par-fam-ex} and
Theorem~\ref{theorem:x-d-uni} (applied to $X_d := \mathcal{X}$)
that $\mathcal{X}$ is $\fie$-unirational. This yields two rational
dominant $\fie$-maps $\phi:\p^{N-1}\dashrightarrow\mathcal{X}$ and
$\psi:\mathcal{X}\dashrightarrow\p^{N-1}$. We may assume one of
$\phi,\psi$ to have degree $>1$. Then it remains to take $f :=
\phi\circ\psi$.
\end{proof}

\refstepcounter{equation}
\subsection{}
\label{subsection:pre-adad}

Let $f$ be as in Lemma~\ref{theorem:good-end}. Note that $f$ is
induced by a rational self-map of
$\p^N\supset\mathcal{X},\widehat{X}$. Indeed, since $p \ge 5$ and
thus $\text{Pic}(\mathcal{X}) = \cel$ (Lefschetz), the map $f$ is
given by such a linear system on $\mathcal{X}$ that is obtained
via restriction of a linear system from $\p^N$.

Put $f_0$ to be the specialization of $f$ at the fiber
$\widehat{X}$ of the family $\mathcal{X}$ (recall that $\fie_0 =
\ra(t)$). More precisely, if $\mathcal{L}_t$ is a (movable) linear
system defining $f$, then its specialization $\mathcal{L}_0$ to
$\widehat{X}$ may acquire some divisorial components in the base
locus.\footnote{~The latter stems from the fact that $f$ may not
be defined (in codimension $2$) on $\widehat{X}$.} Subtracting all
these we arrive at a linear system which we set to define $f_0$.

\begin{lemma}
\label{theorem:f-0-not-sing} $f_0$ is not induced by a self-map of
$\text{Sing}(\widehat{X})$ (for an appropriate $f$). In other
words, if $x_0 = \ldots = x_p = 0$ are the equations of the
singular locus $\text{Sing}(\widehat{X})\subset\widehat{X}$, then
$f_0\ne\mathrm{id},0$ on the subspace $\p^p\subset\p^N$
complementary to $\text{Sing}(\widehat{X})$.
\end{lemma}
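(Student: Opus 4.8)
The idea is to argue by contradiction, using the freedom in the choice of $f$ (equivalently, in the choice of the unirational parametrizations $\phi,\psi$ from Lemma~\ref{theorem:good-end}). Suppose that for \emph{every} admissible choice of $f$ the specialization $f_0$ either is the identity on the complementary subspace $\p^p\subset\p^N$, or is identically zero there (i.e.\ the image of $\p^p$ under $f_0$ lands in $\text{Sing}(\widehat{X})$). The first step is to unwind what ``$f_0$ on the complementary $\p^p$'' means concretely: write the cone $\widehat{X}=\{x_1^p+lx_2^p+\dots+l^{p-1}x_p^p-\alpha x_0^p=0\}\subset\p^N$ with vertex $\text{Sing}(\widehat{X})=\{x_0=\dots=x_p=0\}$, and note that the linear projection $\pi\colon\widehat{X}\dashrightarrow\p^p$ away from the vertex recovers the original hypersurface $X\subset\p^p$ of Proposition~\ref{theorem:c-y-c-1-dim}. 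After subtracting the divisorial base components, $f_0$ descends to a rational self-map of this $\p^p$ preserving $X$; the two excluded cases are exactly $f_0|_{\p^p}=\mathrm{id}$ and $f_0|_{\p^p}$ non-dominant.

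\textbf{Second step.} I would rule out the non-dominant case first. If $f_0$ were non-dominant on $\p^p$, its restriction to $X\subset\p^p$ would be a rational self-map of $X$ of degree $0$; but $X$ is incompressible over $\fie$ by Corollary~\ref{theorem:zay-cor-1} (since $n_X=p$ by Proposition~\ref{theorem:c-y-c-1-dim}), so any $\fie$-self-map of $X$ has positive degree. Here one must be slightly careful that the specialization is taken over $\fie$ rather than merely over $\fie_0$, but that is exactly the setting in which $f$ was constructed in Lemmas~\ref{theorem:1-par-fam-ex}--\ref{theorem:good-end}, and the incompressibility statement is stated over $\fie$. So the non-dominant alternative is impossible for \emph{every} $f$, and we are reduced to showing that $f_0|_{\p^p}=\mathrm{id}$ cannot hold for all choices of $f$.

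\textbf{Third step — the main obstacle.} This is the heart of the matter: I must exhibit at least one $f$ whose specialization does \emph{not} degenerate to the identity on the complementary $\p^p$. The mechanism is the genericity built into the construction. Recall $f=\phi\circ\psi$ where $\psi\colon\mathcal{X}\dashrightarrow\p^{N-1}$, $\phi\colon\p^{N-1}\dashrightarrow\mathcal{X}$ come from $\fie$-unirationality of the generic fiber $\mathcal{X}$, and the family $\mathcal{X}$ was chosen to specialize (via $t\mapsto 0$) to $\widehat{X}$. The unirational parametrization of $\mathcal{X}$ produced by Theorem~\ref{theorem:x-d-uni} is far from unique: one may precompose $\phi$ with any automorphism of $\p^{N-1}$, or postcompose with any element of $\text{Bir}(\mathcal{X})$ of the appropriate type; more importantly, the parametrization from \cite{h-m-p} depends on a choice of a linear subspace $\Lambda\subset\mathcal{X}$ and on further auxiliary linear-algebra data, all of which vary in positive-dimensional families over $\fie$. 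The claim is that the degree-$>1$ map $f$ obtained this way, being a genuinely nonlinear self-map of $\p^N$ restricting to $\mathcal{X}$, has a specialization $f_0$ that is visibly nonlinear on the generic such member, hence cannot equal $\mathrm{id}$ on the complementary $\p^p$ — and if it did for one choice, a small (generic) perturbation of the auxiliary data breaks it, because the set of parametrizations for which $f_0|_{\p^p}=\mathrm{id}$ is a proper closed condition. The hardest point will be to make this last genericity argument rigorous: one needs to check that ``$f_0|_{\p^p}=\mathrm{id}$'' really is a nontrivial Zariski-closed condition on the parameter space of the construction (equivalently, that it is \emph{not} forced), which I would do by producing one explicit parametrization of $\mathcal{X}$ whose specialization is transparently nonconstant on the cone directions $x_0,\dots,x_p$ — for instance by arranging, via the cone structure, that $\psi$ already involves the cone coordinates nontrivially in the limit while $\phi$ does not cancel this. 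Since $\deg f>1$ forces $f$ (hence generically $f_0$) to be nonlinear, and a nonlinear map cannot be the identity, this yields the required $f$ with $f_0\neq\mathrm{id},0$ on $\p^p$.
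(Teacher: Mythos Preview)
Your third step is where the argument breaks down. You correctly flag that one must exhibit \emph{some} $f$ whose specialization is not the identity on the complementary $\p^p$, and you correctly call this ``the hardest point'' --- but you do not actually carry it out. The closing sentence (``$\deg f>1$ forces $f$, hence generically $f_0$, to be nonlinear, and a nonlinear map cannot be the identity'') is a non sequitur: the passage from $f$ to $f_0$ involves subtracting divisorial base components at $t=0$, which can drop the degree arbitrarily, and even if $f_0$ remains nonlinear on $\widehat X$ all of its nonlinearity could sit in the vertex coordinates $x_{p+1},\dots,x_N$ while the $x_0,\dots,x_p$ components reduce to the identity. Likewise, your Zariski-closedness argument only helps once you produce a single point off that closed locus, which is exactly the original problem restated.

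The paper supplies the missing mechanism, and it is local rather than global: replace $f=\phi\circ\psi$ by $f=\phi\circ\sigma\circ\psi$ with $\sigma\in\text{PGL}(N,\fie)$, pick a $\fie$-point $o\in\mathcal X$ at which $\psi$ (resp.\ $\phi$ at $\psi(o)$) is unramified, and use the transitivity of $\sigma$ on frames in $T_{\psi(o)}$ to arrange $f(o)=o$ with $\text{Jac}_o(f)$ having pairwise distinct eigenvalues (over $\ra$). Distinctness of eigenvalues is stable under the specialization $t\mapsto 0$ (the specialized point $o'$ avoids the indeterminacy locus), so the nonzero eigenvalues of $\text{Jac}_{o'}(f_0)$ remain pairwise distinct; but $f_0=\text{id}$ on the complementary $\p^p$ would force a repeated eigenvalue $1$, contradiction. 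The same Jacobian argument disposes of $f_0=0$ on $\p^p$. Note in particular that the paper does \emph{not} use incompressibility here --- that is reserved for Lemma~\ref{theorem:f-0-is-dom} (see Remark~\ref{remark:some-dcos-gfg}) --- and your step-two invocation of Corollary~\ref{theorem:zay-cor-1} is not well posed as stated: if $f_0$ sends $X$ into $\text{Sing}(\widehat X)$ then the projection from the vertex is undefined on the image, so you do not get a rational self-map of $X$ to which the corollary applies.
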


\begin{proof}
Let the notation be as in the proof of
Lemma~\ref{theorem:good-end}. One can choose a $\fie$-point
$o\in\mathcal{X}$ such that $\psi$ (resp. $\phi$) is unramified at
$o$ (resp. $\psi(o)$).

Take $f := \phi\circ\sigma\circ\psi$ for some
$\sigma\in\text{PGL}(N,\fie)$ (to be specified further) in such a
way that $f(o) = o$. Namely, since $\psi,\phi$ induce isomorphisms
of the tangent spaces $T_o = T_{\phi(\psi(o))},T_{\psi(o)}$, and
(adjoint of) $\sigma$ can act transitively on the $N$-tuples of
$\fie$-vectors in $T_{\psi(o)}$, we arrive at such $f$ whose
Jacobian $\text{Jac}_o(f)$ is a $\fie$-matrix with pairwise
distinct eigen values and all defined over $\ra$. Then,
considering $f_0$ as a rational self-map of $\p^N$ (cf. the
discussion at the beginning of {\ref{subsection:pre-adad}}), we
obtain that all non-zero eigen values of the matrix
$\text{Jac}_{o'}(f_0)$ are pairwise distinct as well. Here
$o'\in\widehat{X}$ is the specialization of $o$.\footnote{~Note
that $o'$ is not contained in the indeterminacy locus of $f_0$ for
the indicated $o$ and $\sigma$. Thus $f_0$ is defined at $o'$ and
satisfies $f_0(o') = o'$.}

Now suppose that $f_0$ \emph{is} induced by some rational
endomorphism of $\text{Sing}(\widehat{X})$. Again we regard $f_0$
as a self-map of $\p^N$. Then $f_0$ is regular at some point
$o'\in\p^N$, $f_0(o') = o'$ and the non-zero eigen values of
$\text{Jac}_{o'}(f_0)$ are all different. On the other hand, $f_0
= \text{id}$ on the subspace $\p^p\subset\p^N$ complementary to
$\text{Sing}(\widehat{X})$, a contradiction.

Same argument shows that $f_0$ has non-trivial components on
$\p^p$, i.\,e. $f_0 \ne 0$ there, which concludes the proof.
\end{proof}

Note that generic subspace $\p^p\subset\p^N$ cuts out a subvariety
on $\widehat{X}$ isomorphic $X$ (cf. the beginning of
{\ref{subsection:pre-adaddf}}). Identify $X$ with such a section
$\p^p \cap \widehat{X}$ so that the restriction $f_0\big\vert_X$
is a well-defined rational map. Let $f_X: X\dashrightarrow X$ be
the composition of $f_0\big\vert_X$ and the linear projection
$\widehat{X}\dashrightarrow X$ from $\text{Sing}(\widehat{X})$.

\begin{lemma}
\label{theorem:f-0-is-dom} $f_X\ne\mathrm{id}$ and is dominant.
\end{lemma}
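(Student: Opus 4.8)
The plan is to exploit the nontriviality of $f_0$ on the complementary subspace $\p^p$ established in Lemma~\ref{theorem:f-0-not-sing}, and to transport it to $X$ via the projection from $\text{Sing}(\widehat{X})$. First I would observe that the linear projection $\pi:\widehat{X}\dashrightarrow X$ from $\text{Sing}(\widehat{X})$ restricts to an isomorphism on the open subset $\p^p\cap\widehat{X}=X$ of $\widehat{X}$ lying in the complementary subspace $\p^p\subset\p^N$ (indeed $\pi$ is nothing but the identity on this $X$, since the cone structure means every point of $\widehat{X}$ off the vertex lies on a unique ruling line meeting $X$, and points of $X$ itself are fixed). Consequently $f_X=\pi\circ f_0\big\vert_X$, and to show $f_X$ is dominant it suffices to show that $f_0\big\vert_X:X\dashrightarrow\widehat{X}$ is dominant onto $\widehat{X}$, or — composing with $\pi$ — that the image of $X$ under $f_0$ is not swept out into a proper subvariety after projecting away $\text{Sing}(\widehat{X})$.

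The key point is the eigenvalue/fixed-point data from the previous lemma. By the construction there, for a suitable choice of $\sigma\in\text{PGL}(N,\fie)$ the specialized map $f_0$ (viewed as a rational self-map of $\p^N$) is defined at the specialized point $o'\in X\subset\p^p$, fixes $o'$, and the Jacobian $\text{Jac}_{o'}(f_0)$ has all its nonzero eigenvalues pairwise distinct; moreover $f_0$ is neither the identity nor $0$ on $\p^p$. I would argue that the restriction $f_0\big\vert_X$ has differential at $o'$ of rank $\ge 1$ in the directions transverse to $\text{Sing}(\widehat{X})$: were $df_{0}\big\vert_X$ at $o'$ to have image entirely inside the tangent space to a fibre of $\pi$ (i.e.\ inside the cone direction over a single point), then $f_X$ would be constant near $o'$; but the cone directions form precisely the eigenspace of $\text{Jac}_{o'}(f_0)$ attached to the subspace $\text{Sing}(\widehat{X})$, on which the action is trivial, whereas on the complementary $\p^p$-block $f_0\ne 0,\mathrm{id}$ and the nonzero eigenvalues are distinct — so $df_0\big\vert_X$ at $o'$ cannot collapse into a single fibre direction. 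Hence $f_X$ is dominant (its differential at $o'$ is nonzero, and $X$ is irreducible of the same dimension as its image's closure forces equality), and in particular $f_X\ne\mathrm{id}$ follows from $f_0\ne\mathrm{id}$ on $\p^p$ together with $f_X(o')=o'$ but $df_{o'}(f_X)\ne\mathrm{id}$ — the distinct-eigenvalue condition, inherited on the $\p^p$-block, rules out $df_{o'}(f_X)$ being the identity.

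Concretely I would organize the proof as: (i) identify $X=\p^p\cap\widehat{X}$ and note $\pi\big\vert_X=\mathrm{id}_X$, so $f_X=\pi\circ(f_0\big\vert_X)$ with $\pi$ biregular near $X$ inside $\widehat{X}$; (ii) recall from Lemma~\ref{theorem:f-0-not-sing} that $o'\in X$ is a fixed point of $f_0$ at which $f_0$ is defined, and $\text{Jac}_{o'}(f_0)$ block-decomposes (along $\text{Sing}(\widehat{X})\oplus\p^p$) as (trivial)$\oplus$(distinct nonzero eigenvalues, $\ne 0,\mathrm{id}$); (iii) deduce that $d(f_0\big\vert_X)_{o'}$ is injective modulo the cone/fibre directions, hence $df_{X,o'}\ne 0$, giving dominance of $f_X$; (iv) since $df_{X,o'}$ restricted to $T_{o'}X$ has pairwise distinct nonzero eigenvalues and is not the identity, conclude $f_X\ne\mathrm{id}$. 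The main obstacle I anticipate is step (iii): one must be careful that the differential of $f_0$ \emph{restricted to $X$} — as opposed to the differential of $f_0$ on the ambient $\p^N$ — genuinely sees the nontrivial $\p^p$-block, i.e.\ that $T_{o'}X$ is not accidentally contained in (or mapped into) the trivial $\text{Sing}(\widehat{X})$-eigenspace plus a fibre direction of $\pi$. This is where the genericity of the subspace $\p^p\subset\p^N$ and the freedom in choosing $\sigma$ (hence the eigenvalues, all defined over $\ra$) must be invoked to guarantee transversality of $T_{o'}X$ to the degenerate directions.
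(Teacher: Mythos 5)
Your argument for $f_X\neq\mathrm{id}$ is essentially the same as the paper's (reduce to $f_0\neq\mathrm{id},0$ on $\p^p$ from Lemma~\ref{theorem:f-0-not-sing}), but your argument for dominance has a genuine gap, and it is precisely the gap the paper flags in Remark~\ref{remark:some-dcos-gfg}: ``the fact $\deg f_X\neq 0$ is not immediate from the proof of Lemma~\ref{theorem:f-0-not-sing}, and hence one needs an additional argument (results of the second half of Section~\ref{section:prelim}).'' You try to get dominance purely from the Jacobian data at the fixed point $o'$, but two things go wrong. First, having $df_{X,o'}\neq 0$ (rank $\ge 1$) does not imply dominance; you would need $df_{X,o'}$ (or $df_X$ at a generic point) to have full rank $p-1$, and ``$X$ irreducible of the same dimension as its image's closure forces equality'' is circular --- it presupposes the image has dimension $p-1$, which is exactly what needs proving. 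Second, you cannot upgrade to full rank: the construction in Lemma~\ref{theorem:f-0-not-sing} only guarantees that the \emph{nonzero} eigenvalues of $\mathrm{Jac}_{o'}(f_0)$ are pairwise distinct; upon specializing $f$ to $f_0$ on the singular fibre $\widehat{X}$, some of the original distinct eigenvalues may degenerate to $0$, so there is no reason the $\p^p$-block of $\mathrm{Jac}_{o'}(f_0)$ is invertible, nor that $T_{o'}X$ is mapped isomorphically.

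The paper's actual proof is much shorter and relies on a completely different input: since $f_X$ is a $\ra(\sqrt{-1})$-map of $X$ with $n_X=p$, Corollary~\ref{theorem:zay-cor-1} (the incompressibility of $X$ over $\fie$, coming from Zainoulline's degree formula) forces $\deg f_X>0$ once $f_X$ is non-constant, which Lemma~\ref{theorem:f-0-not-sing} does supply. Your proposal omits this incompressibility step entirely, so it does not close the argument. To repair your approach you would either have to establish generic full rank of $df_X$ (which the specialization does not give you), or --- as the paper does --- invoke the arithmetic incompressibility of $X$ to rule out $\deg f_X=0$.
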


\begin{proof}
Indeed, $f_0$ induces a non-identical map on $X$ by
Lemma~\ref{theorem:f-0-not-sing}, and it remains to apply
Corollary~\ref{theorem:zay-cor-1} to the $\ra(\sqrt{-1})$-map
$f_X$.
\end{proof}

\begin{remark}
\label{remark:some-dcos-gfg} For the last part of
Lemma~\ref{theorem:f-0-is-dom}, observe that the fact $\deg f_{X}
\ne 0$ is not immediate from the proof of
Lemma~\ref{theorem:f-0-not-sing}, and hence one needs an
additional argument (results of the second half of
Section~\ref{section:prelim} for instance) in order to proceed.
\end{remark}

Theorem~\ref{theorem:main} (for the given $X$) now follows from

\begin{prop}
\label{theorem:deg-more-1} $\deg f_X > 1$.
\end{prop}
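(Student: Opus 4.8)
The plan is to bootstrap from Lemma~\ref{theorem:f-0-is-dom}, which already gives that $f_X\colon X\dashrightarrow X$ is a dominant $\ra(\sqrt{-1})$-map, hence $\deg f_X>0$ by Corollary~\ref{theorem:zay-cor-1}. It remains to exclude $\deg f_X=1$, i.e.\ to rule out that $f_X$ is birational. First I would record that, by construction, $f_X$ is the composition of the restriction $f_0\big\vert_X$ (with $X=\p^p\cap\widehat X$) and the linear projection $\pi\colon\widehat X\dashrightarrow X$ from $\text{Sing}(\widehat X)$; since $X$ has $\text{Pic}(X)=\cel$ and $K_X$ ample ($d=p-1\ge 4$, so $X$ is of general type --- wait, no: $\deg X=p=\dim\p^p$, so $X$ is Calabi--Yau), the relevant invariant to track is the degree, which behaves multiplicatively under composition. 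So the key is to show that one of the two factors controlling $f$, namely the unirational parametrization $\phi\colon\p^{N-1}\dashrightarrow\mathcal X$, can be arranged to have degree $>1$ after specialization, and that this degree survives the passage $f\rightsquigarrow f_0\rightsquigarrow f_X$.

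The main step is therefore a \emph{degree–specialization} argument. Since $f=\phi\circ\sigma\circ\psi$ is a $\fie$-map on $\mathcal X$ with $\deg f=\deg\phi\cdot\deg\psi>1$ (we chose one factor to have degree $>1$ in Lemma~\ref{theorem:good-end}), and $\mathcal X$ specializes via $t\mapsto 0$ to $\widehat X$, I would argue that $\deg f_0\ge$ (generic degree of the corresponding map on the general fiber) — the subtlety being exactly the divisorial base-locus components that were subtracted off to define $f_0$ (see the footnote after Lemma~\ref{theorem:good-end}'s setup). Subtracting base components can only \emph{decrease} the self-intersection/degree, so a priori this is where degree could collapse from $>1$ to $1$. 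To control this I would use that $f_0$, read as a self-map of $\p^N$, has Jacobian $\text{Jac}_{o'}(f_0)$ with all nonzero eigenvalues pairwise distinct and $\ra$-rational (established in the proof of Lemma~\ref{theorem:f-0-not-sing}): a birational map fixing $o'$ with distinct eigenvalues is, up to the obvious torus action, rather rigid, and combined with the constraint that $f_0$ is \emph{not} the identity on the complementary $\p^p$ (again Lemma~\ref{theorem:f-0-not-sing}) one gets that $f_X$ cannot be an automorphism of $X$. Here I would invoke de Fernex's theorem, quoted in the introduction: $\text{Bir}(X_N)=\text{Aut}(X_N)$ for $N\ge 4$, and $\text{Aut}(X)=\{\mathrm{id}\}$ for generic such $X$ — so a birational $f_X$ would force $f_X=\mathrm{id}$, contradicting $f_X\ne\mathrm{id}$ from Lemma~\ref{theorem:f-0-is-dom}.

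Concretely, the steps in order: (1) reduce to showing $f_X\notin\text{Bir}(X)$; (2) invoke $\text{Bir}(X)=\text{Aut}(X)=\{\mathrm{id}\}$ (de Fernex, plus genericity of $X$ within its family — note $X$ from Proposition~\ref{theorem:c-y-c-1-dim} lies in a family whose generic member is smooth with trivial automorphisms, and the endomorphism's field of definition $\fie=\ra(\sqrt{-1})$ is fixed independently of the member); (3) conclude that $\deg f_X=1$ would give $f_X=\mathrm{id}$, contradicting Lemma~\ref{theorem:f-0-is-dom}; hence $\deg f_X\ge 2$. I expect the \textbf{main obstacle} to be step (2): one must verify that the \emph{specific} Colliot-Th\'el\`ene hypersurface $X$ (a diagonal-type $p$-th power form, which is rather special and could conceivably have extra automorphisms permuting coordinates) either already has $\text{Aut}(X)=\{\mathrm{id}\}$ over $\overline\ra$, or — more robustly — to note that $f_X$ arising from a \emph{unirational} parametrization genuinely contracts a divisor (since $\p^{N-1}\dashrightarrow\mathcal X$ of degree $>1$ cannot be birational and its Stein-type factorization forces positive-dimensional fibers that specialize to contracted loci on $X$), so that $f_X$ is not even birational regardless of $\text{Aut}(X)$. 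The cleanest route is probably the latter: track the ramification divisor of $\phi$ through specialization and show it descends to a nonempty ramification divisor of $f_X$, which is incompatible with $\deg f_X=1$.
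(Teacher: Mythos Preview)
Your outline has a genuine gap at step~(2), and you correctly flag it yourself: the Colliot-Th\'el\`ene hypersurface $X$ is a diagonal $p$-th power form, so $\aut(X)$ is visibly nontrivial over $\overline{\ra}$ (scaling coordinates by $p$-th roots of unity, for instance). Hence the implication ``$\deg f_X=1\Rightarrow f_X=\mathrm{id}$'' is unavailable for this particular $X$, and the contradiction with Lemma~\ref{theorem:f-0-is-dom} does not fire. The paper even remarks, after finishing the proof of Theorem~\ref{theorem:main}, that your shortcut \emph{does} work once one passes to the generic hypersurface (via Matsumura--Monsky), but the argument for the specific $X$ must come first and cannot rely on $\aut(X)=\{\mathrm{id}\}$. (Incidentally, your self-correction is off: $K_{X_p}=\mathcal{O}(-1)$, so $X$ is Fano, not Calabi--Yau.)

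What the paper does instead is exactly the ``degree--specialization'' step you gesture at but do not carry out. It takes the Stein factorization $\mathcal{Y}\stackrel{a}{\map}\mathcal{Z}\stackrel{b}{\map}\mathcal{X}$ of the resolved family map and argues in two stages. First (Lemma~\ref{theorem:deg-more-1-asas}): if the specialized linear system $\mathcal{L}_0$ were trivial, the proper transform of $\widehat{X}$ would be $a$-exceptional, producing a degree-$0$ self-map $\mu$ of $\widehat{X}$; slicing by generic $\p^p$ and using incompressibility (Corollary~\ref{theorem:zay-cor-1}) then either gives $\deg f_X>1$ directly or a contradiction. Second: once $\mathcal{L}_0$ is nontrivial, $\widehat{X}$ and the central fibre $\mathcal{Z}_0=b^{-1}(0)$ are birational, so $\mathcal{Z}_0$ is \emph{not} a branch divisor of the finite map $b$; hence $\deg b\big\vert_{\mathcal{Z}_0}=\deg b\big\vert_{\mathcal{Z}_t}>1$, i.e.\ $\deg f_0>1$. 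Only now does de~Fernex enter: if $\deg f_X=1$ then $f_X\in\aut(X)$ is linear, and since $f_X=\text{(linear projection)}\circ f_0\big\vert_X$ for generic slices, $f_0$ itself would be induced by a projective transformation of $\p^N$, contradicting $\deg f_0>1$. Note the contradiction is with $\deg f_0>1$, not with $f_X=\mathrm{id}$---so no triviality of $\aut(X)$ is needed. Your proposed ``track the ramification of $\phi$ through specialization'' is morally the second stage above, but the mechanism that makes it work is precisely the Stein factorization and the non-branching of $b$ over the central fibre, which your proposal does not supply.
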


\begin{proof}
Regard $\mathcal{X}$ as a (flat) family of hypersurfaces
$\mathcal{X}_t\subset\p^N\times t\subset \p^N\times\p^1$ (with
$\mathcal{X}_0 := \widehat{X}$). Let $\pi: \mathcal{X}\map\p^1$ be
the natural projection (so that $\pi^{-1}(t) = \mathcal{X}_t$).
Let also $\mathcal{L}_t$ be as in the second paragraph of
{\ref{subsection:pre-adad}}.
\begin{lemma}
\label{theorem:deg-more-1-asas} The linear system $\mathcal{L}_0$
is non-trivial on $\widehat{X}$ (unless $\deg f_X > 1$).
\end{lemma}

\begin{proof}
Suppose the contrary (i.\,e. $\mathcal{L}_0 = \{0\}$). Then the
$\pi$-map $f:\mathcal{X}\dashrightarrow\mathcal{X}$ has
indeterminacies along $\widehat{X}$. Resolve these by some
$\pi$-blow-up $\sigma:\mathcal{Y}\map\mathcal{X}$. Let
$\mathcal{Y}\stackrel{a}{\map}\mathcal{Z}\stackrel{b}{\map}\mathcal{X}$
be the Stein factorization of the resolved $f$. Here $a,b$ are
some $\pi$-morphisms, with $a$ inducing a birational isomorphism
between $\mathcal{Y}$ and $\mathcal{Z}$. Moreover, since $f$ is
not defined on $\widehat{X}$, the proper transform
$\sigma_*^{-1}\widehat{X}$ of $\widehat{X}$ on $\mathcal{Y}$
belongs to the exceptional locus of $a$. Composing further with
$b$ yields a rational self-map $\mu: \widehat{X} \dashrightarrow
\widehat{X}$ of degree $0$.

Notice that $\mu$ does not coincide with the projection
$\widehat{X} \dashrightarrow X$ from $\text{Sing}(\widehat{X})$
because $\deg b
> 1$ (cf. Lemma~\ref{theorem:good-end}) and $\mathcal{X}_0 = \widehat{X}$ is a non-multiple fiber of $\pi$ (so that $\widehat{X}$ is not a branching divisor of $b$).
Thus one may assume $X = \p^p \cap \widehat{X}$ intersects generic
fiber of $\mu$ at $>1$ points. Composing $\mu\big\vert_X$ with
$\widehat{X} \dashrightarrow X$ either gives a self-map $X
\dashrightarrow X$ of degree $>1$ (which we take for $f_X$), or
that $X$ is not incompressible, in contradiction with
Corollary~\ref{theorem:zay-cor-1}.
\end{proof}

Let $\mathcal{Y},\sigma,a,\ldots$ be as above. It follows from
Lemma~\ref{theorem:deg-more-1-asas} that $\widehat{X} =
\mathcal{X}_0$ and the scheme $\mathcal{Z}_0 := b^{-1}(0)$ are
birationally isomorphic via $a\circ\sigma^{-1}$ (recall that $b$
is \emph{finite}). In particular, $\mathcal{Z}_0$ is not a
ramification divisor of $b$, which implies that $\deg
b\big\vert_{\mathcal{Z}_t} = \deg b\big\vert_{\mathcal{Z}_0}$ for
all $t\in\com$ close to $0$.

Now, since $\deg b\big\vert_{\mathcal{Z}_t} > 1$ by construction,
we deduce that $\deg f_0 = \deg b\big\vert_{\mathcal{Z}_0} > 1$ as
well. The latter also gives $\deg f_X
> 1$. Indeed, otherwise restricting $\mathcal{L}_0$ to generic $X = \p^p \cap \widehat{X}$, we get $f_X\in\aut(X)$ according to
Lemma~\ref{theorem:f-0-is-dom} and \cite{de-f}. But then, since
$f_X$ is composed of $f_0\big\vert_X$ and projection $\widehat{X}
\dashrightarrow X$ to the base of the cone, we obtain that $f_0$
must be induced by some projective transformation of $\p^N \supset
\widehat{X}$. The latter obviously contradicts $\deg f_0 > 1$ and
the proof of Proposition~\ref{theorem:deg-more-1} is finished.
\end{proof}

To complete the proof of Theorem~\ref{theorem:main} we simply
apply Remark~\ref{remark:x-p-in-p} and the fact that all the
preceding arguments go verbatim with $\ra$ replaced by $\ra(t)$.
It remains to set $t := t_0 \in \ra$ --- a given general parameter
value --- to obtain hypersurfaces as in the statement of
Theorem~\ref{theorem:main}. Furthermore, in the generic setting
one can replace the argument with $f_X\in\aut(X)$ at the end of
the proof of Proposition~\ref{theorem:deg-more-1} by that with
$f_X = \text{id}$ (cf. \cite[Theorem 5]{ma-mon}), thus getting a
contradiction with Lemma~\ref{theorem:f-0-is-dom}.\footnote{~This
argument may be considered as another way to prove
Theorem~\ref{theorem:main}.}

\bigskip

\section{Proof of Corollary~\ref{theorem:main-cor}}
\label{section:arith}

\refstepcounter{equation}
\subsection{}

We proceed with applying the constructions of
Section~\ref{section:const} to study the arithmetics of
hypersurfaces $X_p\subset\p^p$ (the notation is as
earlier).\footnote{~Note that one can not specialize the
(abundance of) $\fie$-points on $\mathcal{X}$ to ``many"
$\ra$-points on the cone $\widehat{X}$ (as we did with (some of)
endomorphisms) because $n_{X} = p$ and so all the points on
$\widehat{X}$ we obtain this way are concentrated on
$\text{Sing}(\widehat{X})$.}

Let again $X$ be as in Proposition~\ref{theorem:c-y-c-1-dim}.

\begin{lemma}
\label{theorem:f-x-non-p} $f_X$ is non-periodic.
\end{lemma}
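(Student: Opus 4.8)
The plan is to prove that $f_X$ has infinite order in $\text{End}(X)$ by combining the degree information already established with the structure of the endomorphism's action on the cone $\widehat{X}$. First I would note that $\deg f_X > 1$ by Proposition~\ref{theorem:deg-more-1}, and since degrees are multiplicative under composition, $\deg(f_X^{\circ m}) = (\deg f_X)^m > 1$ for every $m \ge 1$; in particular no power $f_X^{\circ m}$ can equal the identity. So $f_X$ is not periodic in the naive sense of $f_X^{\circ m} = \text{id}$, and the content of the lemma must be the stronger assertion that there is no $m$ with $f_X^{\circ m}$ periodic \emph{on a positive-dimensional subvariety} or — reading it in the orbit-dynamics sense relevant to Corollary~\ref{theorem:main-cor} — that $f_X$ has no periodic cycle of subvarieties of the form needed to collapse the orbit.

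The key steps, in order, are: (1) recall from the construction in {\ref{subsection:pre-adad}} and Lemma~\ref{theorem:f-0-not-sing} that $f_X$ is the composite of $f_0\big\vert_X$ with the projection $\widehat{X}\dashrightarrow X$ from $\text{Sing}(\widehat{X})$, and that the Jacobian $\text{Jac}_{o'}(f_0)$ at the distinguished fixed point $o'$ has all non-zero eigenvalues pairwise distinct and defined over $\ra$; (2) transport this to a fixed point of $f_X$ on $X$ — since $o' \in \p^p \cap \widehat{X} = X$ can be arranged, $f_X$ fixes (the image of) $o'$, and the eigenvalues of $\text{Jac}(f_X)$ at that point are (a subset of, or products with the projection-derivative of) the eigenvalues of $\text{Jac}_{o'}(f_0)$; (3) observe that if $f_X^{\circ m} = \text{id}$ then $\text{Jac}(f_X)^m = \text{id}$ at the fixed point, so all eigenvalues would be roots of unity, whereas we can arrange at least one eigenvalue to be an algebraic number of infinite multiplicative order (e.g. not an algebraic integer, or a rational number $\ne \pm 1$) — this is exactly the flexibility built into the choice of $\sigma \in \text{PGL}(N,\fie)$ in the proof of Lemma~\ref{theorem:f-0-not-sing}. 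Hence $f_X$ is non-periodic.

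I expect the main obstacle to be step (2)–(3): making sure that the derivative data at $o'$ genuinely survives the passage through the linear projection $\widehat{X}\dashrightarrow X$, so that $f_X$ — not merely $f_0$ — has a fixed point with a derivative eigenvalue of infinite order. Because $f_X$ involves the projection from $\text{Sing}(\widehat{X})$, one must check that $o'$ is not in the indeterminacy locus of this projection and that the resulting linearization at $o' \in X$ still records one of the ``good'' eigenvalues rather than degenerating to a root of unity or to $0$; this may require choosing $\sigma$ (and the auxiliary point $o$) with slightly more care than in Lemma~\ref{theorem:f-0-not-sing}, for instance prescribing that the eigenvalue in the direction tangent to a generic $\p^p$ through $\text{Sing}(\widehat{X})$ is the infinite-order one. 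Once that is arranged, the conclusion is immediate: a periodic $f_X$ would force that eigenvalue to be a root of unity, contradicting the construction. A secondary, softer point is to state precisely which notion of ``non-periodic'' is meant; I would phrase the lemma as ``$f_X^{\circ m} \ne \mathrm{id}$ for all $m \ge 1$,'' which is what the orbit-dimension argument of Section~\ref{section:arith} actually consumes, and which follows already from $\deg f_X > 1$ together with multiplicativity of degree — so in fact the eigenvalue argument is only needed if one wants the stronger statement that no iterate of $f_X$ restricts to the identity on a divisor.
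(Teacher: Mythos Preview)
Your very first observation is already the paper's entire proof: if $f_X^{\circ m}=\mathrm{id}$ for some $m\ge 1$, then $f_X$ is invertible (with inverse $f_X^{\circ(m-1)}$), hence birational, hence $\deg f_X=1$, contradicting Proposition~\ref{theorem:deg-more-1}. Equivalently, multiplicativity of degree gives $\deg(f_X^{\circ m})=(\deg f_X)^m>1$, so no iterate can be the identity. That is all the lemma asserts and all Section~\ref{section:arith} consumes.

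Everything from ``the content of the lemma must be the stronger assertion'' onward is a detour based on second-guessing the statement. ``Non-periodic'' here is exactly the naive $f_X^{\circ m}\ne\mathrm{id}$; there is no hidden claim about periodic subvarieties or divisors. You even acknowledge this at the end of your proposal. So the eigenvalue/Jacobian machinery in your steps (1)--(3), together with the worry about whether derivative data survives the projection $\widehat{X}\dashrightarrow X$, is unnecessary for this lemma (and would in any case be delicate to carry out, since you have not established that $o'$ actually lies on the chosen section $X=\p^p\cap\widehat{X}$). Drop it: the one-line degree argument is complete and matches the paper.
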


\begin{proof}
Indeed, otherwise we have $f_X^k = \text{id}$ for some $k$, so
that both $f_X,f_X^{k-1}$ are invertible. But this contradicts
Proposition~\ref{theorem:deg-more-1}.
\end{proof}

Fix $f_X$ as in Lemma~\ref{theorem:f-x-non-p}. Then after possibly
replacing $f_X$ by $f_X^k,k\gg 1$, we obtain a point $o\in X(K)$
such that $f_X(o) = o$ and $f_X$ is defined at $o$ (see
\cite{faxruddin}). We also have $\det\text{Jac}_o(f_X)\ne 0$
because $f_X$ is dominant.

\begin{lemma}
\label{theorem:inv-cub} There exists a $\fie$-cube
$\square_o\subset X$ (i.\,e. $\square_o$ is given by linear
inequalities with coefficients in $\fie$) centered at $o$ and
invariant under $f_X$.
\end{lemma}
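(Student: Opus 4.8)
The plan is to construct $\square_o$ by pulling back a standard cube under a local analytic linearization of $f_X$ at the fixed point $o$. First I would pass to a suitable field: since $f_X$, its fixed point $o$, and $\mathrm{Jac}_o(f_X)$ are all defined over a fixed number field (enlarging $K$ if necessary so that $o \in X(K)$ and so that all eigenvalues of $\mathrm{Jac}_o(f_X)$ lie in $K$), I may work throughout over $\fie$, or rather over a finite extension of it contained in $\com$, and keep track of the field of definition at each stage. The point is that a cube $\square_o \subset X$ --- a region cut out by linear inequalities in suitable affine coordinates on $X$ centered at $o$ --- can be specified by finitely many linear inequalities whose coefficients we are free to choose; the constraint is that the resulting region be mapped into itself by $f_X$.

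\smallskip

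Next I would set up local coordinates. Choose affine $\fie$-coordinates $z = (z_1,\dots,z_{p-1})$ on an affine chart of $X$ with $o$ at the origin; in these coordinates $f_X$ is given, near $o$, by a convergent power series $f_X(z) = Az + (\text{higher order})$ with $A = \mathrm{Jac}_o(f_X) \in \mathrm{GL}(p-1,\fie)$ and $\det A \ne 0$. There are two cases. If $A$ is a contraction, i.\,e. all eigenvalues of $A$ have absolute value $< 1$ (after possibly replacing $f_X$ by a power $f_X^m$, which is harmless by Lemma~\ref{theorem:f-x-non-p} and only shrinks the set of admissible cubes), then for any operator norm $\|A\| < \lambda < 1$ and any sufficiently small $\varepsilon > 0$ the polydisc/cube $\{\,|z_i| \le \varepsilon\,\}$ --- or more robustly the box $\{\,|z_i| \le \varepsilon,\ i = 1,\dots,p-1\,\}$ rescaled adaptively in each coordinate to match a genuine $A$-invariant norm --- satisfies $f_X(\square_o) \subset \square_o$: the linear part contracts by a factor $< \lambda$ and the higher-order terms are $O(\varepsilon^2)$, hence dominated once $\varepsilon$ is small. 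Since the norm data and the choice of $\varepsilon$ can be taken in $\fie$, the resulting box is a $\fie$-cube. If $A$ is \emph{not} a contraction, I would instead invoke the local structure at a fixed point with possibly expanding directions: replacing $f_X$ by $f_X^m$ does not change this, so one genuinely needs the general argument. The remedy is to work on the \emph{stable manifold} --- but since we need an honest cube in $X$, the cleaner route is to use the Hartogs-type / Koenigs–Poincaré linearization: assuming the eigenvalues of $A$ are multiplicatively independent enough (no resonances), $f_X$ is locally analytically conjugate to its linear part $z \mapsto Az$ by a biholomorphism $h$ defined on a neighborhood of $o$, and then $\square_o := h^{-1}(\{\,|z_i| \le \varepsilon\,\})$ is forward-invariant provided the $\varepsilon$-box is $A$-invariant; and an $A$-invariant box always exists for a single linear map by taking the unit ball of an adapted norm and inscribing/circumscribing a coordinate box.

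\smallskip

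The main obstacle I expect is precisely the non-contracting, resonant case: if $\mathrm{Jac}_o(f_X)$ has eigenvalues with $|\lambda| \ge 1$, or resonances among the eigenvalues, then neither a naive small box nor the Koenigs linearization works directly, and one must argue more carefully. The cleanest fix, and the one I would pursue, is to first pass to a power $f_X^m$ and then exploit that $f_X$ is \emph{non-periodic and dominant} of degree $> 1$ to arrange a fixed point at which $f_X^m$ is a local contraction: concretely, since $\deg f_X > 1$, a generic branch of the multivalued inverse $f_X^{-1}$ near a suitable fixed point is a contraction, and one replaces the problem "$\square_o$ invariant under $f_X$" by the equivalent "$f_X^{-1}(\square_o) \supseteq \square_o$ for a chosen contracting branch", reducing everything to the contracting case handled above. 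Throughout, the descent of field of definition is routine: the inequalities defining $\square_o$ involve only the entries of $A$, a rational choice of adapted norm, and a rational $\varepsilon$, all in $\fie$, while the higher-order remainder estimates are uniform on a $\fie$-rational neighborhood; hence $\square_o$ is a genuine $\fie$-cube centered at $o$ and invariant under $f_X$, as claimed.
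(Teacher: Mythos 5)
Your approach is genuinely different from the paper's, and in fact it would break the argument downstream. The paper does \emph{not} linearize: it starts from an arbitrary $\ra$-cube $\square \ni o$, looks at the forward orbit $\{f_X^k(\square)\}$, argues from $n_X = p$ that the complements $X \setminus f_X^k(\square)$ cannot accumulate everywhere near $o$ (else a subsequence $f_X^{k_i}(\square) \to o$ would force $o \in X(\ra)$), and then takes an $f_X$-invariant sub-cube inside $\bigcap_k f_X^k(\square)$. The key point is that the resulting $\square_o$ is \emph{exactly} invariant, so that $f_X$ restricts to a self-biholomorphism of the bounded domain $\square_o$; by the Schwarz/Cartan lemma this forces every eigenvalue of $\mathrm{Jac}_o(f_X)$ to have modulus $1$, which is precisely what the paper records right after the lemma and then feeds into the Dirichlet-unit-theorem argument of Lemma~\ref{theorem:mult-indep}.

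Your proposal, by contrast, engineers a \emph{contracting} fixed point: you pass to a power $f_X^m$ or to a contracting inverse branch, and then take a small box that is only \emph{forward}-invariant ($f_X(\square_o) \subsetneq \square_o$). That is not the invariance the paper needs. In the contracting case the eigenvalues of $\mathrm{Jac}_o(f_X)$ have modulus $<1$, which contradicts the conclusion $|\lambda_i| = 1$ that the remainder of Section~\ref{section:arith} depends on (the $\lambda_i$ must be algebraic integers of modulus $1$ in order to be units in $O_K^*$, and then to be multiplicatively independent). Moreover, your fallback via Koenigs--Poincar\'e linearization would require the non-resonance and hyperbolicity hypotheses that the eventual elliptic situation ($|\lambda_i| = 1$) precisely violates; one would be thrown into Siegel/Brjuno small-divisor territory, which is not what is needed here. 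So the step ``reduce to the contracting case'' is not a harmless normalization but a change of the dynamics at $o$ that renders the lemma useless for its intended application. The arithmetic input $n_X = p$ (incompressibility) is doing real work in the paper's proof of this lemma, and your construction does not use it at all --- a sign that the route is off.
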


\begin{proof}
Let $\square\subset X$ be some cube containing $o$ and defined
over $\ra$. Then the set $\displaystyle\bigcup_{k>0}X\setminus
f_X^k(\square)$ is not everywhere dense in $X$. Indeed, otherwise
there would exist a subsequence $f_X^{k_i}(\square)\to o$ for
$i\to\infty$, which implies that $o\in X(\ra)$ and contradicts
$n_X = p$. It remains to take $f_X$-invariant
$\square_o\subseteq\displaystyle\bigcap_{k>0}f_X^k(\square)$.
\end{proof}

It follows from Lemma~\ref{theorem:inv-cub} (and the implicit
function theorem) that the eigen values $\lambda_i$ of the matrix
$\text{Jac}_o(f_X)$ are algebraic numbers (from $K$), all having
norms $|\lambda_i| = 1,\,1\le i\le p-1$. Furthermore, making if
necessary a coordinate change on $\p^p\supset X$ of the form
$x_j\mapsto\alpha_j x_j,\,0\le j\le p$ (i.\,e. rescaling the
metric on $X$), for some $\alpha_j\in \ra^*$, we may assume
$\lambda_i$ to be algebraic integers (from $O_K$).

\begin{lemma}
\label{theorem:mult-indep} The matrix $\mathrm{Jac}_o(f_X)$ is
semi-simple. Moreover, there is $j < p-2$ such that $\lambda_1 =
\ldots = \lambda_j = \pm 1$, while
$\lambda_{j+1},\ldots,\lambda_{p-1}$ are multiplicatively
independent.
\end{lemma}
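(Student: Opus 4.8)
The plan is to deduce both assertions from the arithmetic constraints already obtained: the eigenvalues $\lambda_1,\dots,\lambda_{p-1}$ of $\mathrm{Jac}_o(f_X)$ are algebraic integers in $O_K$ of absolute value $1$ \emph{at the archimedean place coming from the embedding $K\subset\com$}, and in fact at \emph{all} archimedean places — this last point is the key strengthening, and it follows because $\square_o$ is defined over $\fie$, hence the $f_X$-invariance of $\square_o$ is a statement over $\fie$ and persists under every embedding $\fie\hookrightarrow\com$; thus each $\lambda_i$ together with all its Galois conjugates has absolute value $1$. By Kronecker's theorem such an algebraic integer is a root of unity. So, after replacing $f_X$ by a further power $f_X^k$ (which is harmless for everything that follows), we may assume each $\lambda_i$ that is a root of unity equals $1$; the remaining $\lambda_i$ are the ones that are \emph{not} roots of unity, and for those Kronecker forces at least one archimedean conjugate to have absolute value $>1$.

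First I would record semi-simplicity. If $\mathrm{Jac}_o(f_X)$ had a nontrivial Jordan block, then along the corresponding generalized eigendirection the iterates $f_X^k$ would have matrix entries growing polynomially in $k$; but this contradicts the fact that $f_X^k(\square_o)\subseteq\square_o$ for all $k>0$ (a fixed bounded region in suitable affine coordinates around $o$), since an unbounded linear part cannot preserve a bounded neighborhood of the fixed point. Hence $\mathrm{Jac}_o(f_X)$ is diagonalizable over $\overline{\ra}$. Reordering, write $\lambda_1=\dots=\lambda_j=1$ (the root-of-unity eigenvalues, after passing to a power) and let $\lambda_{j+1},\dots,\lambda_{p-1}$ be the remaining ones, none a root of unity. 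We have $j<p-2$: indeed if $j\ge p-2$ then at most one $\lambda_i\ne 1$, but $\det\mathrm{Jac}_o(f_X)=\prod\lambda_i$ is a nonzero algebraic integer all of whose conjugates have absolute value $1$ (product of such), hence itself a root of unity, forcing the single remaining eigenvalue to be a root of unity too — contradiction with $\deg f_X>1$, which by Proposition~\ref{theorem:deg-more-1} is incompatible with $\mathrm{Jac}_o(f_X)$ being of finite order (that would make $f_X$ periodic near $o$, contradicting Lemma~\ref{theorem:f-x-non-p}, or more directly make $\deg f_X=1$). Thus genuinely at least two eigenvalues are non-roots-of-unity.

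It remains to arrange multiplicative independence of $\lambda_{j+1},\dots,\lambda_{p-1}$, and this is the step I expect to be the main obstacle. The idea is: the multiplicative group $\Gamma\subset\overline{\ra}^*$ generated by these eigenvalues is finitely generated and torsion-free (we have already quotiented out all roots of unity), so $\Gamma\cong\cel^m$ for some $m\le p-1-j$. Choose a $\cel$-basis $\mu_1,\dots,\mu_m$ of $\Gamma$; each $\lambda_i$ ($i>j$) is a Laurent monomial in the $\mu_s$, with the total exponent matrix having rank $m$. Now replace $f_X$ by a power once more and, if necessary, conjugate $f_X$ by a suitable element of $\mathrm{PGL}(p,\fie)$ fixing $o$ (as was done in the proof of Lemma~\ref{theorem:f-0-not-sing} to prescribe the Jacobian up to the $\mathrm{PGL}$-action on the tangent space): this lets one modify the multiset of eigenvalues within its $\mathrm{PGL}(p)$-orbit while keeping them in $\fie$, defined over $\ra$, with pairwise distinct values. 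The point is that the set of "bad" configurations — those where some nontrivial multiplicative relation $\prod \lambda_i^{a_i}=1$ holds with $j<i\le p-1$ — is a countable union of proper subvarieties of the (irreducible) variety of admissible Jacobians, while the constructions of Section~\ref{section:const} produce a Zariski-dense (indeed analytically dense) family of choices; hence a generic choice avoids all of them, and for that choice $\lambda_{j+1},\dots,\lambda_{p-1}$ are multiplicatively independent. The subtlety, and the part needing care, is checking that the modifications are genuinely available over $\fie$ and compatible with $f_X$ remaining a degree-$>1$ endomorphism of $X$ fixing a $K$-point — i.e. that one does not lose the earlier normalizations (Lemmas~\ref{theorem:f-x-non-p}, \ref{theorem:inv-cub}) while gaining generic position; this is handled by performing all adjustments on the level of the unirational parametrization $\phi,\psi$ before specialization, exactly as in Section~\ref{section:const}, so that the whole package descends simultaneously.
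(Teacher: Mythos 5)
Your ``key strengthening'' --- that every Galois conjugate of each $\lambda_i$ has absolute value $1$ --- is false, and if it were true it would demolish the whole construction rather than finish the proof. The invariance of the cube $\square_o$ only forces $|\lambda_i|=1$ at the \emph{one} archimedean place coming from the chosen embedding $K\subset\com$ (and its complex conjugate). The cube is a real semialgebraic object cut out by inequalities; an abstract field embedding $\sigma:K\hookrightarrow\com$ that does not restrict to the identity or complex conjugation on the relevant real subfield does \emph{not} carry $\square_o$ to a region invariant under a conjugated map at a real fixed point, so nothing constrains $|\sigma(\lambda_i)|$. Indeed, if all conjugates really did lie on the unit circle, Kronecker would force every $\lambda_i$ to be a root of unity, hence $\mathrm{Jac}_o(f_X)$ would have finite order, hence (after linearization on $\square_o$) $f_X$ would be periodic --- contradicting Lemma~\ref{theorem:f-x-non-p}. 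Your own text trips over exactly this: after asserting ``such an algebraic integer is a root of unity'' for \emph{each} $\lambda_i$, you then invoke ``the remaining $\lambda_i$\ldots\ the ones that are not roots of unity,'' a set you have just argued is empty. This is an internal contradiction, and the subsequent deduction of $j<p-2$ (via ``$\det\mathrm{Jac}_o(f_X)$ is a root of unity, forcing the single remaining eigenvalue to be one too'') rests on the same false premise.

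The paper's proof follows a genuinely different line, and it's worth seeing why. It keeps only the information $|\lambda_i|=1$ at the given place, notes (after rescaling) that the $\lambda_i$ are units in $O_K$, and then applies Dirichlet's unit theorem to assert that the only multiplicative relations among the non-real $\lambda_i$ are the norm relation $\prod\lambda_i=1$ together with the pairing $\lambda_i=\overline{\lambda_i}^{-1}$ coming from $|\lambda_i|=1$; this is what yields both the claimed multiplicative independence and, combined with the fact that the characteristic polynomial of $\mathrm{Jac}_o(f_X)$ is an $\fie$-polynomial, the exclusion of $j=p-2$. Your alternative route for multiplicative independence --- perturbing the Jacobian within its $\mathrm{PGL}$-orbit over $\fie$ and arguing by countable genericity --- is a different mechanism, but it also leans on the discarded root-of-unity dichotomy, and as stated it does not establish that the generic perturbation still fixes a $K$-point with an invariant $\fie$-cube (the perturbation happens before specialization, but the fixed point $o$ comes from \cite{faxruddin} applied to a \emph{fixed} $f_X$, and it's not clear the whole package varies algebraically). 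The semi-simplicity step, by contrast, is fine and matches the paper's: polynomial growth from a Jordan block is incompatible with preserving the bounded region $\square_o$.
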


\begin{proof}
The first claim follows from the fact that $f_X$ linearizes on
$\square_o$. Now, if $\lambda_i = \pm 1$ (or equivalently
$\lambda_i\in\mathbb{R}$) for all $i$, then $f_X^2 = \text{id}$, a
contradiction.

Further, Dirichlet's unit theorem yields
$\lambda_{j+1}\ldots\lambda_{p-1} =
|\lambda_{j+1}\ldots\lambda_{p-1}| = 1,\,\lambda_j =
\lambda_{k}^{-1}$ as the only relations between
$\lambda_{j+1},\ldots,\lambda_{p-1}\in O_K^*$. In particular,
since $j < p - 1$, $\lambda_{j+1},\ldots,\lambda_{p-1}$ generate a
free subgroup $\subseteq \cel^{p-j-1}\subset O_K^*$.

Suppose that $j = p - 2$. Then
$\lambda_{j+1},\ldots,\lambda_{p-1}$ are all equal to
$\lambda_{j+1}^{\pm 1}$ say. On the other hand, characteristic
polynomial of $\text{Jac}_o(f_X)$ is defined over $\fie$ (by
construction of $\square_o$ in Lemma~\ref{theorem:inv-cub}), and
hence the minimal polynomial of $\lambda_{j+1}$ divides it. All
together, this implies that $\lambda^2_i = \pm 1$ for all $i\ge
j+1$, a contradiction.

Thus we get $j<p-2$ and the claim follows.
\end{proof}

The arguments in \cite[Section 2]{abr} and
Lemma~\ref{theorem:mult-indep} imply that Zariski closure of the
$f_X$-orbit of the locus $X(K)$ has dimension $\ge p - j - 1 > 1$.

Finally, to complete the proof of Corollary~\ref{theorem:main-cor}
one replaces $\ra$ by $\ra(t)$, as at the end of
Section~\ref{section:const}, and repeats the previous arguments.

\bigskip

\end{document}